\newtheorem{theorem}{Theorem}[section]
\newtheorem{lemma}{Lemma}[section]
\newtheorem{remark}{Remark}[section]
\newtheorem{example}{Example}[section]
\newtheorem{corollary}{Corollary}[section]
\newtheorem{proposition}{Proposition}[section]
\newcommand{\iR}{\mathbb{R}}
\newcommand{\iN}{\mathbb{N}}
\newcommand{\cH}{\mathcal{H}}
\newcommand{\cV}{\mathcal{V}}
\newcommand{\R}{\mathbb{R}}
\newcommand{\cT}{\mathcal{T}}
\newcommand{\cR}{\mathcal{R}}
\newcommand*{\norm}[1]{\left\Vert{#1}\right\Vert}
\newcommand*{\norms}[1]{\Vert{#1}\Vert}
\newcommand{\astT}{ \overset{T}{\ast}}
\begin{document}
	
\title{Duality estimates for subdiffusion problems including time-fractional porous medium type equations}

\author{Arl\'{u}cio Viana$^{a, b, \circ}$}
\thanks{$\circ$ A. V. thanks CAPES and the Humboldt foundation for the support through the CAPES-Humboldt Research Fellowship Programme for Experienced Researchers, Call 14/2022, number HUMBOLDT-22-PE32739979}
\email{arlucioviana@academico.ufs.br}

\author{Patryk Wolejko}
\email{patrykwolejko@hotmail.com}

\author{Rico Zacher$^{*,a}$}
\thanks{$^*$Corresponding author}
\email[Corresponding author:]{rico.zacher@uni-ulm.de}
\address{$^a$ Institut f\"ur Angewandte Analysis, Universit\"at Ulm, Helmholtzstra\ss{}e 18, 89081 Ulm, Germany.}

\address{$^b$ Departamento de Matem\'atica, Universidade Federal de Sergipe, Av. Marcelo Deda, 49107230 S\~ao Crist\'ov\~ao, Brazil.}


\begin{abstract}
	We prove duality estimates for time-fractional and more general subdiffusion problems. An important example is given by subdiffusive porous medium type equations. Our estimates can be used to prove uniqueness of weak solutions to such problems, and they allow to extend a key estimate from classical reaction-diffusion systems to the subdiffusive case. Besides concrete equations involving a Laplacian, we also consider abstract problems in a Hilbert space setting. 
\end{abstract}

\maketitle

\bigskip
\noindent \textbf{Keywords:} fractional derivative, duality estimates, porous medium equations, subdif\-fusion-reaction systems, uniqueness of weak solutions.

\vspace{12pt}
 \noindent \textbf{MSC(2020)}: 35R11 (primary), 45K05, 76S05.

\section{Introduction}
The main purpose of this paper is to derive duality estimates for equations whose prototypical
example is given by the subdiffusive porous medium type equation
\begin{equation} \label{IntPMED}
	\partial_t \big(k\ast [u-u_0]\big)-\Delta \big(a u^m\big)= f\quad\mbox{on}\;(0,T)\times \Omega
\end{equation}
with $m\ge 1$. Here $\Omega$ is a bounded domain in $\iR^N$ and the (nonnegative) unknown function $u$ also satisfies a homogeneous Dirichlet or Neumann boundary condition
as well as $u|_{t=0}=u_0$. By $k\ast v$ we mean the
convolution on the positive halfline $\iR_+:=[0,\infty)$ w.r.t.\ the time variable,
that is
$(k\ast v)(t)=\int_0^t k(t-\tau)v(\tau)\,d\tau$, $t\ge 0$. The kernel $k$ is assumed to satisfy
the condition
\begin{itemize}
\item [{\bf ($\mathcal{PC}$)}] $k\in L_{1,\,loc}(\iR_+)$ is nonnegative and nonincreasing, and there exists a kernel $l\in L_{1,\,loc}(\iR_+)$ such that
$k\ast l=1$ on $(0,\infty)$.
\end{itemize}
In this case, $k$ is said to be a $\mathcal{PC}$-kernel (cf. \cite{ZWH}) and we also write $(k,l)\in \mathcal{PC}$. Note that $(k,l)\in \mathcal{PC}$ implies that $l$ is completely
positive, see \cite[Theorem 2.2]{CN} and \cite{CP1}, in particular $l$ is nonnegative.
Observe that for sufficiently smooth $v$ with $v(0)=v_0$, 
\begin{equation} \label{operatorreform}
\partial_t \big(k\ast [v-v_0]\big)=k \ast \partial_t v.
\end{equation}

Condition ($\mathcal{PC}$) covers a wide spectrum of relevant integro-differential operators w.r.t.\ time that
appear in applications in physics in the context of subdiffusion processes, and it has already been considered in many works. The most prominent example is given by the pair $(k,l)=(g_{1-\alpha},g_\alpha)$ with $\alpha\in(0,1)$, where
$g_\beta$ denotes the standard (or Riemann-Liouville) kernel
\[
g_\beta(t)=\,\frac{t^{\beta-1}}{\Gamma(\beta)}\,,\quad
t>0,\quad\beta>0.
\]
With this choice, $\partial_t(k\ast v)$ becomes the classical Riemann-Liouville fractional derivative
$\partial_t^\alpha v$ of order $\alpha$, and $k \ast \partial_t v={}^c D_t^\alpha v$, the Caputo fractional derivative (cf.\ the right-hand side in (\ref{operatorreform})). Multi-term fractional derivatives and operators of distributed order also satisfy ($\mathcal{PC}$), see
\cite[Section 6]{VZ} and \cite{SaCa}, where further examples can be found.

The function $a$ in \eqref{IntPMED} is assumed to satisfy
\begin{equation} \label{Intacond}
	0<a_1\le a(t,x)\le a_2\quad\mbox{on}\;(0,T)\times \Omega,
\end{equation}
with two constants $a_1,a_2>0$. A crucial point in our analysis is that we aim for estimates
of $u$ in which $a$ only appears through the two constants $a_1$ and $a_2$. Our results provide in particular the estimate
\begin{equation} \label{MainEstimate}
		\int_0^T \int_\Omega  u^{m+1}\,dx\,dt
		 \le C(m) \Big(\frac{a_2}{a_1}\Big)^{m+1}\Big(T\norm{u_0}_{L_{m+1}(\Omega)}^{m+1}
		+\norm{l}_{L_1((0,T))}^{m+1}
		\norm{f}_{L_{m+1}(\Omega_T)}^{m+1}\Big),
	\end{equation}
see Theorem \ref{PMEthm} below. Estimate \eqref{MainEstimate} is the nonlocal (or subdiffusive) analogue of
a corresponding duality estimate (take formally $l=1$ in \eqref{MainEstimate}) for the problem 
\begin{equation} \label{localPDE}
\partial_t u-\Delta \big(a u^m\big)= f\quad\mbox{on}\;(0,T)\times \Omega,
\end{equation}
see \cite{LaPi}. The $L_2$-estimate for \eqref{localPDE} in the linear case $m=1$ seems to be better known and appears in many works, see e.g.\ \cite[Proposition 6.1]{Pie11} and \cite{LaPi} and the references 
given therein. It is a key estimate in the theory of systems of reaction-diffusion 
equations with quadratic reaction terms and control of mass as it shows the integrability of the nonlinear terms, thereby allowing to prove the global existence
of weak solutions in the case of arbitrary (different) diffusion coefficients of the involved species, see \cite{DeFePiVo}. It is now known that there are even global smooth solutions in this case (\cite{CaGoVa,FeMoTa}). The $L_{m+1}$-estimate for \eqref{localPDE} is equally useful when considering degenerate diffusion of porous medium type, cf.\ \cite{LaPi}. We refer to Section \ref{chem} for more details and further references.
There we also discuss applications of our duality estimates
in connection with subdiffusion-reaction systems, see Theorem
\ref{subdiffsystemthm} and Theorem \ref{entropyvar}.

Our proof of \eqref{MainEstimate} relies on estimates we derive earlier in Section \ref{dualityestimates} on pairs of functions
$(u,w)$ subject to
\begin{equation} \label{BasicPDE}
\partial_t \big(k\ast [u-u_0]\big)-\Delta w= f.
\end{equation}
These basic estimates can also be used to obtain uniqueness of weak solutions for equations such as
\[
\partial_t \big(k\ast [u-u_0]\big)-\Delta\big( a \Phi(u)\big)= f,
\]
where $a$ is as above and $\Phi:\;\iR\to \iR$ is continuous and strictly increasing with $\Phi(0)=0$, see Theorem \ref{uniquetheorem} below. This result and its proof significantly generalize the considerations in \cite[Section 2, Theorem 1]{LoOPR} on a time-fractional porous medium type equation in one space dimension, see also Remark \ref{PloRem}. In addition to considering a more general equation with a $\mathcal{PC}$-kernel $k$, our proofs are also rigorous. In particular, a suitable time regularization of the weak formulations is explained in detail.

As we show in Section 7, the results for \eqref{BasicPDE} can be extended to abstract problems in a Hilbert space setting where instead of the Laplacian we have an operator induced by a symmetric bilinear form. In contrast to the Laplacian case, where we also
allow subsolutions, the results in the abstract setting are restricted to solutions. As an example, we consider fully nonlocal porous medium type equations in the whole space given by
\begin{equation} \label{2NPMED_Intro}
	\partial_t \big(k\ast [u-u_0]\big) + (-\Delta)^{\beta/2} \big(a u^m\big)= f,\quad (t,x)\in (0,T)\times\R^N ,
\end{equation}
with $\beta\in(0,2]$. We find new estimates in both cases $m\geq1$ and $0<m\leq1$, under suitable conditions.

Time-fractional and more general subdiffusion equations of porous medium type (including problems with nonlocal diffusion term) have recently been investigated by many authors and different methods, see e.g.\ \cite{Aka,ACV2,DiValVes,DNA19,LRS18,LoOPR,Plo15,SchmWitt,VZ,WWZ}. 
Abstract nonlinear and nonlocal in time equations with a $\mathcal{PC}$-kernel in a Hilbert space framework are investigated in \cite{Aka}. Here, the nonlinearity is in
subdifferential form, and as an application, the author obtains an existence and uniqueness result for porous medium type problems under very mild assumptions on the data. 
Abstract (stochastic) nonlinear time-fractional and more general subdiffusion problems with explicit time dependence in the nonlinearity are studied in \cite{LRS18,LRS19} by means of monotonicity methods. Among others, the authors are able to prove existence and uniqueness for deterministic and stochastic time-fractional porous medium equations.
In a series of papers (see \cite{JakWitt,Sap,SchWitt}), existence and uniqueness of entropy solutions to doubly nonlinear equations with an integro-differential operator as in 
\eqref{IntPMED} and $k$ from a rather general class of kernels (including the standard kernel) are studied by using tools from the theory of accretive operators.
This approach relies on results for abstract nonlinear Volterra equations \cite{Grip2,Grip1} (see also \cite{CNa,CN}) and also works for nonlinear terms with explicit space dependence.
Degenerate problems of the form
\begin{equation} \label{problemWWZ}
\partial_t\big(k\ast[u-u_0]\big)-\mbox{div}\big(A\nabla \varphi(u)\big) = f,
\end{equation}
with a $\mathcal{PC}$-kernel $k$ and a merely bounded, measurable and uniformly elliptic
coefficient matrix $A=A(t,x)$, are considered in \cite{SchmWitt,WWZ}. In \cite{WWZ}, existence of bounded weak solutions is established under rather weak assumptions on the data. Further, an $L_1$-contraction principle is derived. Based on the results in \cite{WWZ}, existence of entropy solutions to \eqref{problemWWZ} for general $L_1$-data is shown in \cite{SchmWitt}. Concerning the long-time behaviour, decay estimates for time-fractional porous medium equations are derived in
\cite{AfVal,DiValVes,VZ}. Finally, numerical schemes for such equations are investigated in \cite{LoOPR,Plo15,Plo14}.

The paper is organized as follows. Section 2 collects some auxiliary tools. We state various useful properties of the dual convolution on an interval $(0,T)$ and recall a fundamental identity of integro-differential operators of the form $\frac{d}{dt}(k\ast \cdot)$. We also
consider the Yosida approximation of such operators and explain how it leads to a useful regularization of the kernel $k$. In Section 3, we prove a general duality estimate for the equation \eqref{BasicPDE}. This estimate is then applied to the porous medium type problem \eqref{IntPMED} in Section 4. Section 5 is concerned with uniqueness of weak solutions.
In Section 6, we apply our basic duality estimate to systems of subdiffusion-reaction systems. In Section 7, we extend the basic duality estimate (for solutions) to the abstract Hilbert space setting and discuss as an example a fully nonlocal porous medium type problem.

\section{Preliminaries} \label{Prelim}
Let $\mathcal{H}$ be a real Hilbert space with inner product $(\cdot, \cdot)_\mathcal{H}$ and $T>0$. For $f\in L_1((0,T))$ and $g\in L_1((0,T); \mathcal{H})$ we define
\[
(f \astT g)(t):=\int_t^T f(\tau-t)g(\tau)\,d\tau,\quad t\in (0,T).
\]
As the following proposition shows, $\astT$ can be regarded as the dual convolution with respect to the interval $(0,T)$.
\begin{proposition}
	Let $1\le p\le \infty$, $b, h\in L_1((0,T))$ and $k\in H^1_1((0,T))$. Then the following properties hold true.
	\begin{enumerate}[(a)] 
		\item \textit{Duality property:} For every $f\in L_p((0,T);\cH)$ and $g\in L_{p'}((0,T);\cH)\;(\frac{1}{p}+\frac{1}{p'}=1)$
		\begin{equation} \label{dualprop}
			\int_0^T \big( (h\ast f)(t) , g(t) \big)_\mathcal{H}\,dt=\int_0^T \left( f(t) ,(h\astT g)(t)\right)_\mathcal{H}\,dt.
		\end{equation}
		\item \textit{Successive dual convolution property:} For every $g\in L_1((0,T);\cH)$ we have
		\begin{equation}\label{succprop}
			b\astT (h \astT g)=(b\ast h)\astT g\quad \;\;\mbox{in}\;L_1((0,T);\cH).
		\end{equation}
		\item  \textit{Duality property of integro-differential operators:} For every $f\in L_p((0,T);\cH)$ and $g\in L_{p'}((0,T);\cH)$,
		\begin{equation} \label{dualpropp}
			\int_0^T \left( \frac{d}{dt}(k\astT f)(t) ,  g(t) \right)_\mathcal{H}\,dt=-\int_0^T \left( f(t) ,  \frac{d}{dt}(k\ast g)(t) \right)_\mathcal{H} \,dt.
		\end{equation}
	\end{enumerate}	
\end{proposition}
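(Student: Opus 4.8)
The plan is to derive (a) and (b) directly from Fubini's theorem together with a change of variables, and then to obtain (c) as a consequence of (a) combined with Leibniz-type differentiation formulas for the two convolutions.

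For (a), I would insert $(h\ast f)(t)=\int_0^t h(t-\tau)f(\tau)\,d\tau$ into the left-hand side and pull the scalar $h(t-\tau)$ out of the inner product, producing an iterated integral over the triangle $\{0\le\tau\le t\le T\}$. Since $h\in L_1((0,T))$, $f\in L_p((0,T);\mathcal{H})$ and $g\in L_{p'}((0,T);\mathcal{H})$, Young's inequality gives $h\ast f\in L_p((0,T);\mathcal{H})$ and Hölder's inequality makes the integrand absolutely integrable, so Fubini applies. Exchanging the order of integration turns the region into $\{\tau\le t\le T\}$, and recognizing $\int_\tau^T h(t-\tau)g(t)\,dt=(h\astT g)(\tau)$ yields the right-hand side. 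For (b) the argument is the same in spirit: expanding both dual convolutions gives an iterated integral over $\{t\le s\le\sigma\le T\}$, and after swapping the $s$- and $\sigma$-integrations the substitution $u=s-t$ identifies the inner integral $\int_t^\sigma b(s-t)h(\sigma-s)\,ds$ with $(b\ast h)(\sigma-t)$, which is exactly the kernel of $(b\ast h)\astT g$.

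For (c) I would first record the two differentiation formulas
\[
\frac{d}{dt}(k\ast g)=k(0)\,g+k'\ast g,\qquad \frac{d}{dt}(k\astT f)=-k(0)\,f-k'\astT f,
\]
valid for $k\in H^1_1((0,T))$ and $f,g\in L_1((0,T);\mathcal{H})$. Using the absolute-continuity representation $k(s)=k(0)+\int_0^s k'(\sigma)\,d\sigma$ for the continuous representative of $k$, substituting into the convolution and applying Fubini rewrites $(k\ast g)(t)$ as $k(0)\int_0^{t} g(\tau)\,d\tau+\int_0^{t}(k'\ast g)(r)\,dr$, whose $t$-derivative is $k(0)g(t)+(k'\ast g)(t)$ for a.e.\ $t$ (and symmetrically for $k\astT f$). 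Inserting these formulas, the left-hand side of (c) becomes $-k(0)\int_0^T(f,g)_\mathcal{H}\,dt-\int_0^T(k'\astT f,g)_\mathcal{H}\,dt$, while the right-hand side equals $-k(0)\int_0^T(f,g)_\mathcal{H}\,dt-\int_0^T(f,k'\ast g)_\mathcal{H}\,dt$; the two first terms coincide, so it remains to show $\int_0^T(k'\astT f,g)_\mathcal{H}\,dt=\int_0^T(f,k'\ast g)_\mathcal{H}\,dt$. This is precisely the symmetric form of (a): applying (a) with $f$ and $g$ interchanged and using symmetry of $(\cdot,\cdot)_\mathcal{H}$ gives $\int_0^T(h\astT f,g)_\mathcal{H}\,dt=\int_0^T(f,h\ast g)_\mathcal{H}\,dt$, which with $h=k'\in L_1((0,T))$ closes the argument.

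The main obstacle I anticipate is the rigorous justification in (c) of the Leibniz formulas when $f,g$ are merely $L_p$/$L_{p'}$ and $k$ is only in $H^1_1((0,T))$: the differentiation under the integral sign is not classical, so one must interpret $\frac{d}{dt}(k\ast g)$ and $\frac{d}{dt}(k\astT f)$ as weak derivatives in $L_1((0,T);\mathcal{H})$ and take $k(0)$ along the continuous representative of $k$. By contrast, the Fubini and change-of-variables steps in (a) and (b) are routine once the integrability bounds from Young's and Hölder's inequalities are in place.
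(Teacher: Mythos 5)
Your proposal is correct. Parts (a) and (c) follow essentially the paper's own route: Fubini for (a), and for (c) the differentiation formula $\frac{d}{dt}(k\astT f)=-k(0)f-\dot{k}\astT f$ (justified, as you note, through the absolutely continuous representative of $k$) combined with the symmetric form of (a) applied to $h=\dot{k}$; the paper merely runs this as one chain of equalities rather than isolating the two Leibniz formulas first. Part (b) is where you genuinely diverge: the paper does \emph{not} compute directly, but instead tests $b\astT(h\astT g)$ against an arbitrary $\psi\in C([0,T];\mathcal{H})$, applies the duality property (a) repeatedly together with associativity and commutativity of $\ast$, and then invokes the fundamental lemma of the calculus of variations to identify the two functions a.e. Your argument — expanding both dual convolutions over the simplex $\{t\le s\le\sigma\le T\}$, swapping the inner integrations, and recognizing $\int_t^\sigma b(s-t)h(\sigma-s)\,ds=(b\ast h)(\sigma-t)$ after the substitution $u=s-t$ — is more elementary and self-contained, since it avoids the density/testing step entirely; what the paper's route buys is that (b) becomes a formal consequence of (a) and associativity of $\ast$, with no new measure-theoretic bookkeeping beyond what (a) already required. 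Both are complete proofs, and your integrability justifications (Young and H\"older for (a), local integrability for the Fubini step in (b)) are the right ones.
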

\begin{proof}
	The duality property can be easily verified using Fubini's theorem. Indeed,
	\begin{align*}
		\int_0^T \big( (h\ast f)(t) , g(t) \big)_\mathcal{H}\,dt & = \int_0^T \left( \int_0^t h(t-\tau)f(\tau)\,d\tau ,  g(t) \right)_\mathcal{H} \,dt\\
		& =  \int_0^T \int_\tau^T h(t-\tau) \big( f(\tau) , g(t) \big)_\mathcal{H} \,dt\,d\tau \\
		& =\int_0^T \left( f(\tau) , (h\astT g)(\tau)\right)_\mathcal{H}\,d\tau.
	\end{align*}
	
	To see the second property, let $\psi\in C([0, T]; \mathcal{H})$ be an arbitrary function. Then applying the duality property several times and using that $\ast$ is associative, we have 
	\begin{align*}			
		\int_0^T \left(\psi(t),\big(b\astT(h \astT g)\big)(t)\right)_\mathcal{H}\,dt & =\int_0^T \left( (b\ast \psi)(t) , (h \astT g)(t) \right)_\mathcal{H}\, dt \\
		& =\int_0^T \big(((h * b) * \psi)(t), g(t)\big)_\mathcal{H}\, dt \\
		& =\int_0^T \left(\psi(t) , ((h \ast b) \astT g) (t)\right)_\mathcal{H}\,dt.
	\end{align*}
Clearly, $h\ast b=b\ast h$. Taking $\psi(t)=\varphi(t)x$ with $\varphi\in C([0,T])$ and $x\in \cH$, it thus follows that
\[
\int_0^T \varphi(t) \big(x,\big(b\astT(h \astT g)\big)(t)\big)_\mathcal{H}\,dt=
\int_0^T \varphi(t) \big(x, ((b \ast h) \astT g) (t)\big)_\mathcal{H}\,dt.
\]
The fundamental lemma of the calculus of variations now implies that
\[
 \big(x,\big(b\astT(h \astT g)\big)(t)\big)_\mathcal{H}=\big(x, ((b \ast h) \astT g) (t)\big)_\mathcal{H}\quad \mbox{for a.a.}\;t\in (0,T)\;\mbox{and all}\;x\in \cH,
\]
from which we can immediately deduce the asserted identity. 

Finally, we prove \eqref{dualpropp}. Using \eqref{dualprop} and the fact that $k\in H_1^1((0,T))$ we have
\begin{align*}
		\int_0^T &\left( \frac{d}{dt}(k\astT f)(t) , g(t) \right)_\mathcal{H}\,dt  = \int_0^T \left( \frac{d}{dt} \left( \int_t^T  k(\tau-t) f(\tau) d \tau\right), g(t) \right)_\mathcal{H} d t \\
		&\quad =  \int_0^T \left(  \left( \int_t^T[-\dot{k}(\tau-t)] f(\tau) d \tau-k(0) f(t)\right), g(t) \right)_\mathcal{H} d t \\
		&\quad =  \int_0^T \left[ \left(-(\dot{k} \astT f)(t) , g(t)\right)_\mathcal{H}  -k(0) \big(f(t) , g(t) \big)_\mathcal{H} \right] d t \\
		&\quad =- \int_0^T \left[ \left(f(t) , (\dot{k}\ast g)(t)\right)_\mathcal{H}  +k(0) \big(f(t) , g(t) \big)_\mathcal{H} \right] d t \\
		& \quad= -\int_0^T  \left(f(t) , \frac{d}{dt}(k\ast g)(t)\right)_\mathcal{H}  d t .
	\end{align*}
\end{proof}

The next statement can be already found in \cite[Lemma 2.2]{VeZa08}.
\begin{lemma} \label{fundlemma1}
	Let $\cH$ be a real Hilbert space and $T>0$. Then for any
	$k\in H^1_1((0,T))$ and any $v\in L_2((0,T);{\cH})$ there holds
	\begin{align}
		\left( v(t),\frac{d}{dt}\,(k\ast v)(t)\right)_{{\cH}} =&\;
		\frac{1}{2}\,\frac{d}{dt}\,(k\ast
		\norm{v}_{\cH}^2)(t)+\frac{1}{2}\,k(t)\norm{v(t)}_{\cH}^2 \nonumber\\
		&\;+\,\frac{1}{2}\,\int_0^t [-\dot{k}(s)]\,\norm{v(t)-v(t-s)}_{\cH}^2\,ds,\quad
		\mbox{a.a.}\;t\in(0,T).\label{ident1}
	\end{align}
\end{lemma}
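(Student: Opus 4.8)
The plan is to compute both sides of \eqref{ident1} by means of the Leibniz rule for the differentiated Volterra convolution and to check that they agree. Since $k\in H^1_1((0,T))$ has an absolutely continuous representative with $k(t-\tau)=k(0)+\int_0^{t-\tau}\dot{k}(\sigma)\,d\sigma$, differentiating $(k\ast v)(t)=\int_0^t k(t-\tau)v(\tau)\,d\tau$ with respect to both the variable upper limit and the kernel yields, for a.a.\ $t\in(0,T)$,
\[
\frac{d}{dt}(k\ast v)(t)=k(0)\,v(t)+(\dot{k}\ast v)(t),
\]
where the right-hand side lies in $L_2((0,T);\cH)$ by Young's inequality (as $\dot{k}\in L_1$ and $v\in L_2$). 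Pairing with $v(t)$ gives the target value
\[
\Big(v(t),\tfrac{d}{dt}(k\ast v)(t)\Big)_{\cH}=k(0)\,\norm{v(t)}_{\cH}^2+\big(v(t),(\dot{k}\ast v)(t)\big)_{\cH},\qquad\text{a.a.}\;t.
\]

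Next I would reduce the right-hand side of \eqref{ident1} to this same expression. Applying the same rule to the scalar convolution gives $\tfrac{d}{dt}(k\ast\norm{v}_{\cH}^2)(t)=k(0)\norm{v(t)}_{\cH}^2+\int_0^t\dot{k}(t-\tau)\norm{v(\tau)}_{\cH}^2\,d\tau$. In the last term of \eqref{ident1} I would substitute $\tau=t-s$, turning it into $\tfrac12\int_0^t[-\dot{k}(t-\tau)]\norm{v(t)-v(\tau)}_{\cH}^2\,d\tau$, and then expand $\norm{v(t)-v(\tau)}_{\cH}^2=\norm{v(t)}_{\cH}^2-2(v(t),v(\tau))_{\cH}+\norm{v(\tau)}_{\cH}^2$ into three integrals. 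Evaluating $\int_0^t\dot{k}(t-\tau)\,d\tau=k(t)-k(0)$ on the first, identifying the cross term with $(v(t),(\dot{k}\ast v)(t))_{\cH}$, and noting that the resulting $\norm{v(\tau)}_{\cH}^2$-integral cancels exactly against the one coming from $\tfrac{d}{dt}(k\ast\norm{v}_{\cH}^2)$, all $\norm{v(t)}_{\cH}^2$-contributions (including the explicit $\tfrac12 k(t)\norm{v(t)}_{\cH}^2$) collapse to $k(0)\norm{v(t)}_{\cH}^2$. What remains is precisely $k(0)\norm{v(t)}_{\cH}^2+(v(t),(\dot{k}\ast v)(t))_{\cH}$, matching the target and proving \eqref{ident1}.

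The routine part is this algebra; the point I expect to be the main obstacle is the rigorous justification of the differentiation formula $\tfrac{d}{dt}(k\ast v)=k(0)v+\dot{k}\ast v$ for a merely $L_2$-valued $v$ (so that $k\ast v$ is genuinely a.e.\ differentiable) together with the attendant claim that \eqref{ident1} holds for almost every $t$ rather than only formally. I would secure this either directly, via the absolute continuity of $k$ combined with Fubini's theorem, or by first establishing \eqref{ident1} for $v$ in a dense class such as $C^1([0,T];\cH)$, where every manipulation above is classical, and then passing to the limit in $L_1((0,T))$: each term depends continuously on $v$ in suitable norms, since $\dot{k}\ast\cdot$ is bounded on $L_2$, $k\in H^1_1((0,T))\hookrightarrow C([0,T])$ is bounded, and the quadratic terms are controlled by Cauchy--Schwarz together with $\norm{v_n}_{\cH}^2\to\norm{v}_{\cH}^2$ in $L_1$. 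Either route confirms that every term in \eqref{ident1} belongs to $L_1((0,T))$ and that the identity holds a.e.
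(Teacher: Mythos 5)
Your proof is correct: the Leibniz rule $\frac{d}{dt}(k\ast v)=k(0)v+\dot{k}\ast v$ a.e.\ (justified via absolute continuity of $k$, Fubini, and Lebesgue differentiation for Bochner integrals), the expansion of $\norm{v(t)-v(t-s)}_{\cH}^2$, and the resulting cancellation do yield \eqref{ident1}, and all terms are finite a.e.\ by Young's inequality. Note that the paper itself gives no proof of Lemma \ref{fundlemma1} — it quotes the identity from \cite[Lemma 2.2]{VeZa08} — and your computation is essentially the standard argument found in that reference, so there is nothing to contrast beyond observing that your write-up supplies the rigor (in particular the a.e.\ differentiability for merely $L_2$-valued $v$) that the present paper delegates to the citation.
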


Lemma \ref{fundlemma1} provides a fundamental identity for integro-differential operators of the
form $\frac{d}{dt}(k\ast \cdot)$ provided that $k\in H^1_1((0,T))$. In particular, it does not apply
to the Riemann-Liouville fractional derivative, unless the function $v$ is much more regular. In order to use \eqref{ident1} for kernels of type $\mathcal{PC}$ one can regularize the kernel by replacing 
$\frac{d}{dt}(k\ast \cdot)$ with its Yosida approximation. This regularization method goes back to
\cite{VeZa08} (see the proof of Theorem 2.1) and \cite{Za-08,ZWH} and is now well established.
We will also use it in the present paper.

To explain this regularization approach, we first summarize some properties of kernels of type $\mathcal{PC}$. Let $(k,l)\in \mathcal{PC}$.
For $\gamma\ge 0$, the kernels $s_\gamma, r_\gamma \in L_{1,loc}(\iR_+)$ are defined by the scalar Volterra equations (cf.\ \cite{GLS})
\begin{align*}
s_\gamma(t)+\gamma(l\ast s_\gamma)(t) & = 1,\quad t>0,\\
r_\gamma(t)+\gamma(l\ast r_\gamma)(t) & = l(t),\quad t>0.
\end{align*}
Both $s_\gamma$ and $r_\gamma$ are nonnegative for all $\gamma\ge 0$, by complete positivity of $l$ (see \cite{CN}, \cite{JanI}). Furthermore, the relaxation function $s_\gamma$ belongs to $H^1_{1,\,loc}(\iR_+)$ and is nonincreasing for all 
$\gamma\ge 0$.

For $\gamma>0$, we denote by $h_\gamma\in L_{1,loc}(\iR_+)$ the resolvent kernel associated
with $\gamma l$ (cf.\ \cite{GLS}), which means that
\begin{equation} \label{hndef}
h_\gamma(t)+\gamma(h_\gamma\ast l)(t)=\gamma l(t),\quad t>0.
\end{equation}
Note that $h_\gamma=\gamma r_\gamma=-\dot{s}_\gamma \in L_{1,\,loc}(\iR_+)$, which also shows that $h_\gamma$ is nonnegative. Further, it is well-known that for any $f\in L_p((0,T))$, $1\le p<\infty$, we have
$h_n\ast f\rightarrow f$ in $L_p((0,T))$ as $n\rightarrow \infty$, see e.g.\ \cite{Za-08}.

Setting
\begin{equation} \label{kndef}
k_{\gamma}=k\ast h_\gamma,\quad \gamma>0,
\end{equation}
it is known that $k_\gamma=\gamma s_\gamma$ (see e.g.\ \cite{Za-08}), and hence
$k_\gamma$ is also nonnegative and nonincreasing, and it lies in
$H^1_{1,\,loc}(\iR_+)$ as well. The kernels $k_\gamma$, $\gamma>0$,  approximate the kernel $k$ in the following sense. Letting $X$ be a real Banach space, the operator $B$ defined by
\[ B u=\frac{d}{dt}(k\ast u),\;\;D(B)=\{u\in L_p((0,T);X):\,k\ast u\in \mbox{}_0 H^1_p((0,T);X)\},
\]
where the zero means vanishing at $t=0$, is known to be
$m$-accretive in $L_p((0,T);X)$, see
\cite{Grip1}. Its Yosida approximations $B_{n}$ are defined by
$B_{n}=nB(n+B)^{-1},\,n\in \iN$. For every
$u\in D(B)$, $B_{n}u\rightarrow Bu$ in $L_p((0,T);X)$ as
$n\to \infty$. Moreover,
\begin{equation} \label{Yos}
B_n u=\frac{d}{dt}(k_n\ast u),\quad u\in L_p((0,T);X),\;n\in
\iN,
\end{equation}
see \cite{Za-08}, so that the kernels $k_n$ take on the role of $k$ when $B$ is replaced by its Yosida approximations $B_n$.
   
Virtually all important examples of  $\mathcal{PC}$-kernels (e.g.\ fractional derivative (also with exponential shift), multi-term fractional derivative, distributed order case) are completely monotone, see \cite[Section 6]{VZ}. This means that $k\in C^\infty((0,\infty))$ and
$(-1)^j k^{(j)}\ge 0$ on $(0,\infty)$ for all $j\in \iN_0$. In this case, we can say much more about $l$, $s_\gamma$, $h_\gamma$ and $k_\gamma$. In fact, since $k\ast l=1$, we have $k(0+)=\infty$
and thus \cite[Thm.\ 5.4, p. 159]{GLS} shows that $l$ is also completely monotone. The latter property 
implies that for every $\gamma>0$ the resolvent kernel of $\gamma l$, that is $h_\gamma$, is completely monotone, too, see \cite[Thm.\ 3.1, p. 148]{GLS}. Since $h_\gamma=-\dot{s}_\gamma$ (see above), it follows that $s_\gamma$ is also completely monotone, for all $\gamma>0$. 
This implies in particular that
$k_\gamma=\gamma s_\gamma$ is convex in $(0,\infty)$ for all $\gamma>0$.

We record two further properties (which should be well-known) in the following proposition. For the reader's convenience we provide a proof. Some ideas are contained in \cite{KRZ,SaCa}.
\begin{proposition} \label{comoprop}
Let $k$ be a completely monotone $\mathcal{PC}$-kernel. Then
\begin{itemize}
\item[(i)] $\lim_{t\to 0+}tk(t)=0$.
\item[(ii)] The function $\{t\to t\dot{k}(t)\}$ is locally integrable on $[0,\infty)$.
\end{itemize}
\end{proposition}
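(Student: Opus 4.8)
The plan is to prove (i) first, using only that $k$ is nonnegative, nonincreasing and locally integrable, and then to deduce (ii) from (i) by an integration by parts. Complete monotonicity enters mainly through the facts that $k\in C^\infty((0,\infty))$ and $\dot{k}\le 0$ on $(0,\infty)$, which make the differentiation and the integration by parts entirely classical away from the origin.

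For (i) I would fix $t>0$ and exploit monotonicity to trap $tk(t)$ between $0$ and an integral that vanishes as $t\to 0^+$. Since $k$ is nonincreasing, $k(s)\ge k(t)$ for every $s\in[t/2,t]$, whence
\[
\frac{t}{2}\,k(t)\le \int_{t/2}^t k(s)\,ds\le \int_0^t k(s)\,ds .
\]
Because $k\in L_{1,loc}(\iR_+)$, the right-hand side is the value at $t$ of an absolutely continuous function vanishing at the origin, and so it tends to $0$ as $t\to 0^+$; hence $tk(t)\to 0$. Note that this step uses none of the complete monotonicity, only the structural properties of a $\mathcal{PC}$-kernel.

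For (ii), since $\dot{k}\le 0$ the integrand is nonnegative, $|t\dot{k}(t)|=-t\dot{k}(t)$, so it suffices to bound $\int_0^R(-t\dot{k}(t))\,dt$ for an arbitrary $R>0$. On any compact subinterval $[\varepsilon,R]\subset(0,\infty)$ the kernel is smooth, so integration by parts gives
\[
\int_\varepsilon^R \big(-t\dot{k}(t)\big)\,dt=-Rk(R)+\varepsilon k(\varepsilon)+\int_\varepsilon^R k(t)\,dt .
\]
Letting $\varepsilon\to 0^+$, the boundary term $\varepsilon k(\varepsilon)$ vanishes by (i), while $\int_\varepsilon^R k\to\int_0^R k<\infty$ by local integrability. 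By monotone convergence the left-hand side converges to $\int_0^R(-t\dot{k}(t))\,dt$, which therefore equals the finite number $-Rk(R)+\int_0^R k(t)\,dt$. This is exactly local integrability of $\{t\mapsto t\dot{k}(t)\}$ on $[0,\infty)$.

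The only genuine subtlety, and the place where (i) earns its keep, is the possible singularity of $k$ at the origin (for instance $k(0+)=\infty$ in the fractional case $k=g_{1-\alpha}$); this is precisely what could make the boundary contribution $\varepsilon k(\varepsilon)$ in the integration by parts problematic. Part (i) shows it is harmless, so I expect the main (modest) obstacle to be the control of the endpoint $t=0$, which the monotonicity estimate above handles directly. I would therefore avoid the Bernstein integral representation of $k$, which is available here but not needed for either assertion.
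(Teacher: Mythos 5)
Your proof is correct, but it follows a genuinely different route from the paper in both parts. For (i), the paper exploits the pair structure $(k,l)\in\mathcal{PC}$: from $1=(k\ast l)(t)\ge k(t)l(t)(1\ast 1)(t)=tk(t)l(t)$ it gets $tk(t)\le 1/l(t)$, and concludes via $l(0+)=\infty$ (which uses that $l$ inherits complete monotonicity from $k$). Your trapping argument $\tfrac{t}{2}k(t)\le\int_{t/2}^{t}k(s)\,ds\le\int_0^t k(s)\,ds\to 0$ is more elementary and strictly more general: it needs only that $k$ is nonnegative, nonincreasing and locally integrable, so it would apply to any $\mathcal{PC}$-kernel without any smoothness assumption. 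For (ii), the paper uses convexity of $k$ (a consequence of complete monotonicity) to get the pointwise bound $0\le -t\dot{k}(t)\le 2k(t/2)$ and then invokes $k\in L_{1,loc}$; your argument instead integrates by parts on $[\varepsilon,R]$, kills the boundary term $\varepsilon k(\varepsilon)$ with part (i), and passes to the limit by monotone convergence. What each buys: the paper's route gives a pointwise domination of $-t\dot{k}(t)$ by a locally integrable function (in the same spirit as the convexity bound \eqref{knbound} used later for $k_n$), while yours gives the cleaner exact identity $\int_0^R(-t\dot{k}(t))\,dt=\int_0^R k(t)\,dt-Rk(R)$ and requires less of complete monotonicity — only $C^1$-smoothness on $(0,\infty)$ and monotonicity, not convexity. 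Both proofs are complete and rigorous; your handling of the only delicate point, the possible singularity of $k$ at the origin, is exactly right.
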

\begin{proof}
(i) We know that $l$ is also completely monotone. In particular, $k$ and $l$ are nonincreasing and thus
\[
1=(k\ast l)(t)\ge k(t)l(t) (1\ast 1)(t)=t k(t)l(t),\quad t>0,
\]
which implies
\[
t k(t)\le \frac{1}{l(t)},\quad t>0.
\]
Since $l(0+)=\infty$, it follows that $\lim_{t\to 0+}tk(t)=0$.

(ii) Convexity of $k$ gives
\[
k(\frac{t}{2}\big)\ge k(t)+\dot{k}(t)\big(-\frac{t}{2}\big),\quad t>0,
\]
and hence $0\le -t \dot{k}(t)\le 2k(t/2)$ for all $t>0$, by positivity of $k$ and $-\dot{k}$. The assertion then follows from
$k\in L_{1,loc}(\iR_+)$.
\end{proof}
\section{The basic duality estimate with Laplacian} \label{dualityestimates}
Let $T>0$ and $\Omega\subset \iR^N$ be a (sufficiently smooth) bounded domain. Set $\Omega_T=(0,T)\times \Omega$. We want to study inequalities of the form
\begin{equation} \label{SubProb}
	\partial_t \big(k\ast [u-u_0]\big)-\Delta w\le f,\quad (t,x)\in \Omega_T,
\end{equation}
in a weak setting. Here $u$ and $w$ are the functions of interest, $k$ is a $\mathcal{PC}$-kernel
and $u_0$ (initial value for $u$) and $f$ are given data. Later, we will consider various situations
where $w$ is expressed in terms of $u$. Concerning boundary conditions, we consider the case
of a Dirichlet boundary condition, $w=0$ on $ (0,T)\times \partial \Omega$, and the case
of a Neumann boundary condition, $\partial_\nu w=0$ on $ (0,T)\times \partial \Omega$.

The weak formulation of \eqref{SubProb} reads as follows. The pair $(u,w)$ satisfies 
\eqref{SubProb} in the weak sense if 
\begin{equation} \label{weakform}
	\int_0^T\int_{\Omega}\Big( -\eta_t \big(k\ast [u-u_0]\big)+\nabla w\cdot \nabla \eta\Big)\,dx\,dt\le
	\int_0^T\int_{\Omega} f\eta\,dx\,dt
\end{equation}
for all nonnegative test functions $\eta\in \cT$ satisfying $\eta|_{t=T}=0$, where 
\begin{align*}
\cT&=\mathring{H}^{1,1}_2(\Omega_T):= H^1_2((0,T);L_2(\Omega))\cap
L_2((0,T);\mathring{H}^1_2(\Omega))\;\;\mbox{in the Dirichlet case},\\
\cT&={H}^{1,1}_2(\Omega_T):= H^1_2((0,T);L_2(\Omega))\cap
L_2((0,T);{H}^1_2(\Omega))\;\;\mbox{in the Neumann case}.
\end{align*}
Here $\mathring{H}^1_2(\Omega):=\overline{C_0^\infty(\Omega)}\,{}^{H^1_2(\Omega)}$.
The functions $u$ and $w$ and the data are assumed to lie in suitable spaces so that \eqref{weakform} makes sense. We will be more specific in Lemma \ref{BasicEstimate1} below.
In any case, we assume that $(k\ast u)|_{t=0}=0$ so that no additional term appears when integrating by parts in time.

In order to derive {\em a priori} estimates for $(u,w)$, it is expedient to reformulate \eqref{weakform} 
in such a way that $k$ is replaced by the more regular kernel $k_n$ ($n\in \iN$). This can be achieved by adapting the ideas from Lemma 3.1 in \cite{Za-08}. We choose
\[
\eta=h_n\astT \varphi,
\]
where $\varphi\in \cT$ is nonnegative with $\varphi|_{t=T}=0$ and $h_n$ ($n\in \iN$) is the kernel introduced in Section 
\ref{Prelim}. Observe that $\eta|_{t=T}=0$ and $\eta_t=h_n\astT \varphi_t$. Using the duality property \eqref{dualprop} and $k_n=k\ast h_n$ and integrating by parts we then have
\begin{align*}
	-\int_0^T\int_{\Omega} &\eta_t \big(k\ast [u-u_0]\big)\,dx\,dt =-\int_0^T\int_{\Omega} 
	\big(h_n\astT \varphi_t\big) \big(k\ast [u-u_0]\big)\,dx\,dt\\
	&  =-\int_0^T\int_{\Omega} 
	\varphi_t \big(k_n\ast [u-u_0]\big)\,dx\,dt=\int_0^T\int_{\Omega} 
	\varphi \partial_t\big(k_n\ast [u-u_0]\big)\,dx\,dt.
\end{align*}
Further
\[
\int_0^T\int_{\Omega}\nabla w\cdot \nabla \eta\,dx\,dt=\int_0^T\int_{\Omega}
(h_n\ast\nabla w)\cdot \nabla \varphi\,dx\,dt
\]
and
\[
\int_0^T\int_{\Omega} f\eta\,dx\,dt=\int_0^T\int_{\Omega}(h_n\ast f)\varphi\,dx\,dt.
\]
We may then argue as in \cite[Lemma 3.1]{Za-08} to obtain that for every nonnegative $\psi\in \mathring{H}^1_2(\Omega)$ (Dirichlet case) and $\psi\in {H}^1_2(\Omega)$ (Neumann case), respectively, there holds
\begin{align} \label{weakform2}
	\int_\Omega \Big(\psi \partial_t\big(k_n\ast [u-u_0]\big)+(h_n\ast\nabla w)\cdot \nabla \psi\Big)\,dx\le \int_\Omega (h_n\ast f)\psi\,dx,\quad \mbox{a.a.}\;t\in (0,T),
\end{align}
for all $n\in \iN$. Note that the converse is also true, that is, \eqref{weakform2} implies \eqref{weakform}.
\begin{lemma} \label{BasicEstimate1}
	Let $u_0\in L_2(\Omega)$, $f\in L_{2}(\Omega_T)$, $u\in L_2(\Omega_T)$ and $k\ast u\in C([0,T];L_2(\Omega))$ such that $(k\ast u)|_{t=0}=0$.
	Assume that one of the following conditions is satisfied.
	\begin{itemize}
		\item[(D)] $w\in L_2((0,T);\mathring{H}^1_2(\Omega))$, $w\ge0$ and \eqref{weakform2} holds for all $\psi\in \mathring{H}^1_2(\Omega)$ with $\psi\ge0$.
		\item[(N)] $w\in L_2((0,T);{H}^1_2(\Omega))$, $w\ge 0$ and \eqref{weakform2} holds for all $\psi\in{H}^1_2(\Omega)$ with $\psi\ge0$.
	\end{itemize}
	Then
	\begin{align}
		& \int_0^T \int_\Omega w (h_n\ast u)\,dx\,dt+
		\frac{1}{2}\,\int_0^T \int_\Omega \big[k_n(T-t)+ k_n(t)\big]| l \astT \nabla w|^2\,dx\,dt
		\nonumber\\
		&+\frac{1}{2}\int_0^T \int_0^t\int_\Omega [-\dot{k}_n(s)]
		\big|(l \astT \nabla w)(t,x)-(l \astT \nabla w)(t-s,x)\big|^2\,dx\,ds\,dt\nonumber\\
		& \le \int_0^T \int_\Omega w u_0 (1\ast h_n)\,dx\,dt+\int_0^T \int_\Omega (h_n\ast f)(l\astT w) \,dx\,dt\nonumber\\
		& \quad+\int_0^T \int_\Omega \big[(h_n\astT \nabla w)-(h_n\ast\nabla w)\big] \cdot (l \astT \nabla w)\,dx\,dt. \label{BasEst1}
	\end{align}
\end{lemma}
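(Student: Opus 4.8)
The natural strategy is to start from the regularized weak inequality \eqref{weakform2}, in which the irregular kernel $k$ has been replaced by $k_n\in H^1_{1,loc}$, and to test it with a well-chosen nonnegative $\psi$ depending on $w$. Since the left-hand side of the target \eqref{BasEst1} features the quantity $l\astT\nabla w$ and the fundamental identity from Lemma \ref{fundlemma1} (which requires $k_n\in H^1_1$, now available), the correct test function should be one whose gradient produces $l\astT\nabla w$. Concretely, I would integrate \eqref{weakform2} against a suitable time factor and take $\psi$ essentially of the form $l\astT w$ (evaluated at the running time $t$), so that $\nabla\psi = l\astT\nabla w$ and the diffusion term $(h_n\ast\nabla w)\cdot\nabla\psi$ becomes $(h_n\ast\nabla w)\cdot(l\astT\nabla w)$. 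Integrating in time over $(0,T)$ and keeping track of all three terms in \eqref{weakform2} is the backbone of the argument.

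\textbf{Handling the three terms.} For the source term, the duality property \eqref{dualprop} moves the dual convolution off $w$ and onto $f$, turning $\int_0^T\int_\Omega (h_n\ast f)(l\astT w)\,dx\,dt$ into the corresponding right-hand side contribution (this is exactly the second term on the right of \eqref{BasEst1}). For the diffusion term, I would write $(h_n\ast\nabla w)\cdot(l\astT\nabla w)$ and then add and subtract $(h_n\astT\nabla w)$; the difference $[(h_n\astT\nabla w)-(h_n\ast\nabla w)]\cdot(l\astT\nabla w)$ is precisely the ``error'' term carried on the last line of \eqref{BasEst1}, while the $h_n\astT\nabla w$ piece should be reorganized using the successive dual convolution property \eqref{succprop} together with $h_n\ast l$-type identities (recall $h_n=\gamma r_\gamma$ and the Volterra relations from Section \ref{Prelim}) so as to recombine into the leading $w(h_n\ast u)$ term and the kernel-weighted quadratic expressions. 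The time-derivative term $\psi\,\partial_t(k_n\ast[u-u_0])$ is where Lemma \ref{fundlemma1} enters: applying \eqref{ident1} to $v=l\astT\nabla w$ (after expressing things through the gradient variable) generates the boundary-type contribution $\tfrac12[k_n(T-t)+k_n(t)]|l\astT\nabla w|^2$ and the double-integral term with $[-\dot k_n(s)]$, both appearing on the left of \eqref{BasEst1}, and it produces the initial-value contribution $\int w u_0(1\ast h_n)$ on the right (using $(k\ast u)|_{t=0}=0$ to discard boundary terms at $t=0$).

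\textbf{Key identities and the main obstacle.} The crux is the algebraic manipulation that links the convolution $k\ast[u-u_0]$ (weighted against a dual-convolution test function) to the quadratic form in $l\astT\nabla w$ via the identity $k_n\ast l = (k\ast h_n)\ast l$ and the defining relation $k\ast l=1$; this is what makes the kernel coefficients $k_n(T-t)$ and $k_n(t)$ emerge symmetrically. I expect the main obstacle to be the careful bookkeeping of the two \emph{different} convolutions — ordinary $\ast$ versus dual $\astT$ — when combining \eqref{dualprop}, \eqref{succprop}, and \eqref{dualpropp}; in particular, justifying that $\partial_t(k_n\ast\cdot)$ and $l\astT\cdot$ interact correctly so that Lemma \ref{fundlemma1} applies to the right quantity, and that the non-matching pieces collapse exactly into the stated error term rather than leaving residual terms. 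A secondary technical point is regularity: one must ensure $l\astT w$ is an admissible test function (lying in $\mathring{H}^1_2$ or $H^1_2$ spatially and regular enough in time), which should follow from the assumptions $w\in L_2((0,T);\mathring H^1_2)$ (resp.\ $H^1_2$) and $l\in L_{1,loc}$, but needs to be checked. Throughout, nonnegativity of $k_n$, $-\dot k_n$, $h_n$ and $l$ (from complete positivity, recalled in Section \ref{Prelim}) guarantees that the discarded/controlled terms have the correct sign.
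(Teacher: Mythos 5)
Your skeleton coincides with the paper's proof: test \eqref{weakform2} with $\psi=(l\astT w)(t,\cdot)$, integrate over $(0,T)$, split the elliptic term by adding and subtracting $h_n\astT\nabla w$ (producing the error term on the last line of \eqref{BasEst1}), and invoke \eqref{succprop}, \eqref{dualpropp} and Lemma \ref{fundlemma1} with the regular kernel $k_n$. However, the way you wire these tools to the individual terms of \eqref{weakform2} is crossed, and both crossed steps would fail. You assert that Lemma \ref{fundlemma1} is applied to the time-derivative term $\int\psi\,\partial_t(k_n\ast[u-u_0])$ and that this produces the quadratic kernel terms. That is impossible: identity \eqref{ident1} pairs $v$ with $\partial_t(k_n\ast v)$ for one and the same function $v$, whereas here $\psi=l\astT w$ and $u-u_0$ are unrelated functions, and this term contains no $\nabla w$ at all, so there is no ``expressing things through the gradient variable.'' What the time-derivative term actually yields --- and this is the key computation absent from your proposal --- follows from the duality property \eqref{dualprop} together with the identity $l\ast k_n=l\ast k\ast h_n=1\ast h_n$:
\begin{align*}
\int_0^T\!\!\int_\Omega (l\astT w)\,\partial_t\big(k_n\ast[u-u_0]\big)\,dx\,dt
&=\int_0^T\!\!\int_\Omega w\,\big(l\ast\partial_t(k_n\ast[u-u_0])\big)\,dx\,dt\\
&=\int_0^T\!\!\int_\Omega w\,\big(h_n\ast[u-u_0]\big)\,dx\,dt,
\end{align*}
which splits into the leading term $\int w\,(h_n\ast u)$ on the left of \eqref{BasEst1} and the initial-value term $\int w\,u_0(1\ast h_n)$ on the right (note $h_n\ast u_0=u_0\,(1\ast h_n)$ since $u_0$ does not depend on time).

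Symmetrically, you claim that the $h_n\astT\nabla w$ part of the elliptic term ``recombines into the leading $w(h_n\ast u)$ term''; it cannot, since it contains no $u$. It is precisely this part that carries Lemma \ref{fundlemma1}: writing, via $k\ast l=1$, $f=-\partial_t(1\astT f)$ and \eqref{succprop},
\begin{equation*}
h_n\astT\nabla w=-\partial_t\big[(k\ast l)\astT(h_n\astT\nabla w)\big]
=-\partial_t\big[k_n\astT(l\astT\nabla w)\big],
\end{equation*}
and then moving the derivative by \eqref{dualpropp}, one arrives at $\int_0^T\int_\Omega(l\astT\nabla w)\cdot\partial_t\big[k_n\ast(l\astT\nabla w)\big]\,dx\,dt$; applying \eqref{ident1} to $v=l\astT\nabla w$ and integrating in $t$ produces exactly the two nonnegative quadratic terms on the left of \eqref{BasEst1}, the weight $k_n(T-t)$ arising from $\tfrac12\,(k_n\ast\norms{l\astT\nabla w}_{L_2(\Omega)}^2)(T)$. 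A minor further slip: no duality move is needed for the source term; with $\psi=(l\astT w)(t,\cdot)$ it is already $\int(h_n\ast f)(l\astT w)$, the form in which it appears in \eqref{BasEst1}. As written, your outline cannot be completed without supplying the missing identity for the time-derivative term and reassigning the fundamental identity to the elliptic term.
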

\begin{proof}
	For $t\in (0,T)$ we choose in \eqref{weakform2}
	\[
	\psi=(l \astT w)(t,\cdot)
	\]
	and integrate in time over $(0,T)$.
	
	By the duality property \eqref{dualprop},
	\begin{align*}
		\int_0^T \int_\Omega (l \astT w) \partial_t\big(k_n\ast [u-u_0]\big)\,dx\,dt=
		\int_0^T \int_\Omega w \big(l\ast \partial_t\big(k_n\ast [u-u_0]\big)\big)\,dx\,dt.
	\end{align*}
	Note that
	\[
	l\ast \partial_t\big(k_n\ast [u-u_0]\big)=\partial_t \big(l\ast k\ast h_n\ast [u-u_0]\big)=h_n\ast [u-u_0].
	\]
	Thus it follows that
	\begin{align} \label{Term1}
		\int_0^T \int_\Omega (l \astT w) \partial_t\big(k_n\ast [u-u_0]\big)\,dx\,dt=
		\int_0^T \int_\Omega w \big(h_n\ast [u-u_0]\big)\,dx\,dt.
	\end{align}
	
	Next, we write
	\begin{align*}
		\int_0^T \int_\Omega (h_n\ast\nabla w)\cdot \nabla (l \astT w)\,dx\,dt=
		\int_0^T \int_\Omega (h_n\astT\nabla w)\cdot (l \astT \nabla w)\,dx\,dt+\cR_n,
	\end{align*}
	where 
	\[
	\cR_n=\int_0^T \int_\Omega \big[(h_n\ast\nabla w)-(h_n\astT\nabla w)\big] \cdot (l \astT \nabla w)\,dx\,dt.
	\]
	
	Using the identity $f=-\partial_t (1\astT f)$ and the successive dual convolution property we find that
	\begin{align}
		\int_0^T \int_\Omega (h_n\astT\nabla w)& \cdot (l \astT \nabla w)\,dx\,dt
		= -\int_0^T \int_\Omega \partial_t \big[(k\ast l)\astT (h_n\astT\nabla w)\big] \cdot (l \astT \nabla w)\,dx\,dt\nonumber\\
		&=  -\int_0^T \int_\Omega \partial_t \big[(k\ast h_n)\astT (l \astT\nabla w)\big] \cdot (l \astT \nabla w)\,dx\,dt\nonumber\\
		&=  \int_0^T \int_\Omega (l \astT\nabla w) \cdot \partial_t \big[k_n\ast (l \astT \nabla w)\big]\,dx\,dt, \label{term2a}
	\end{align}
	where in the last step we employ the duality relation \eqref{dualpropp}. By Lemma \ref{fundlemma1},
	\begin{align}
		\int_\Omega & (l \astT\nabla w) \cdot  \partial_t \big[k_n\ast (l \astT \nabla w)\big]\,dx\nonumber\\
		& = \frac{1}{2}\,\frac{d}{dt}\Big(k_n\ast \norms{l \astT \nabla w}^2_{L_2(\Omega)}\Big)(t)+
		\frac{1}{2}\,k_n(t)\norms{l \astT \nabla w}^2_{L_2(\Omega)}(t)\nonumber\\
		& \quad+\frac{1}{2}\int_0^t [-\dot{k}_n(s)]
		\norm{(l \astT \nabla w)(t,\cdot)-(l \astT \nabla w)(t-s,\cdot)}^2_{L_2(\Omega)}ds,\quad \mbox{a.a.}\,t\in (0,T). \label{term2b}
	\end{align}
	Combining \eqref{term2a} and \eqref{term2b} yields
	\begin{align}
		\int_0^T \int_\Omega (h_n\astT\nabla w)& \cdot (l \astT \nabla w)\,dx\,dt =
		\frac{1}{2}\,\int_0^T \big[k_n(T-t)+ k_n(t)\big]\norms{l \astT \nabla w}^2_{L_2(\Omega)}(t)\,dt
		\nonumber\\
		& \quad+\frac{1}{2}\int_0^T \int_0^t [-\dot{k}_n(s)]
		\norm{(l \astT \nabla w)(t,\cdot)-(l \astT \nabla w)(t-s,\cdot)}^2_{L_2(\Omega)}ds\,dt.
		\label{term2c}
	\end{align}
	This proves \eqref{BasEst1}. 
\end{proof}

\begin{corollary} \label{BasicEstimate2}
	Suppose that the assumptions from Lemma \ref{BasicEstimate1} are satisfied. Then
	\begin{align}
		\int_0^T \int_\Omega w u\,dx\,dt+&
		\frac{1}{2}\,\int_0^T \int_\Omega \big[k(T-t)+ k(t)\big]| l \astT \nabla w|^2\,dx\,dt\nonumber\\
		& \le \int_0^T \int_\Omega \big(u_0+l\ast f\big) w \,dx\,dt.
		\label{BasEst2}
	\end{align}
\end{corollary}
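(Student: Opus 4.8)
The plan is to derive \eqref{BasEst2} by letting $n\to\infty$ in the inequality \eqref{BasEst1} of Lemma \ref{BasicEstimate1}. First I would collect the relevant convergence properties of the regularizing kernels recorded in Section \ref{Prelim}: one has $h_n\ast f\to f$ in $L_p((0,T))$ for every $f\in L_p$ with $1\le p<\infty$, and likewise for Banach-space-valued functions, so in particular $h_n\ast u\to u$ and $h_n\ast f\to f$ in $L_2(\Omega_T)$, and $k_n=k\ast h_n\to k$ in $L_1((0,T))$. Taking $f\equiv 1$ gives $1\ast h_n=1-s_n$ with $0\le s_n\le 1$ and $s_n\to 0$ in every $L_p((0,T))$. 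I would also note that $l\astT w$ and $l\astT\nabla w$ belong to $L_2(\Omega_T)$ by Young's inequality (as $l\in L_1((0,T))$ and $w,\nabla w\in L_2(\Omega_T)$), and that $h_n\astT g\to g$ in $L_2(\Omega_T)$ as well, since reflecting $t\mapsto T-t$ turns $\astT$ into $\ast$ and thus reduces this to the convergence $h_n\ast[g(T-\cdot)]\to g(T-\cdot)$.

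With these facts I would treat the terms of \eqref{BasEst1} separately. On the left-hand side the third (memory) term is nonnegative, because $k_n$ is nonincreasing, so it may be discarded while keeping the inequality. The first term tends to $\int_0^T\int_\Omega wu\,dx\,dt$ since $h_n\ast u\to u$ in $L_2(\Omega_T)$ and $w\in L_2(\Omega_T)$. On the right-hand side, the first term converges to $\int_0^T\int_\Omega wu_0\,dx\,dt$ by dominated convergence, using $1\ast h_n=1-s_n\to 1$ boundedly and $wu_0\in L_1(\Omega_T)$; the second converges to $\int_0^T\int_\Omega (l\ast f)w\,dx\,dt$ via $h_n\ast f\to f$ in $L_2(\Omega_T)$ combined with the duality property \eqref{dualprop} applied with $h=l$; and the last term vanishes in the limit because $(h_n\astT\nabla w)-(h_n\ast\nabla w)\to 0$ in $L_2(\Omega_T)$ while $l\astT\nabla w\in L_2(\Omega_T)$.

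The main obstacle is the second left-hand term $\tfrac12\int_0^T\int_\Omega[k_n(T-t)+k_n(t)]\,|l\astT\nabla w|^2\,dx\,dt$, for which genuine convergence is not clear: here $k_n\to k$ only in $L_1((0,T))$, whereas the weight $g(t):=\norms{(l\astT\nabla w)(t,\cdot)}^2_{L_2(\Omega)}$ is only known to lie in $L_1((0,T))$. Since only a lower bound is needed, I would invoke Fatou's lemma: because $k_n\ge 0$, $g\ge 0$, and $k_n\to k$ (hence also $k_n(T-\cdot)\to k(T-\cdot)$) almost everywhere along a subsequence, a routine subsequence argument yields $\liminf_n \tfrac12\int_0^T[k_n(T-t)+k_n(t)]\,g(t)\,dt\ge \tfrac12\int_0^T[k(T-t)+k(t)]\,g(t)\,dt$. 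Taking $\liminf$ on the left of \eqref{BasEst1} (after dropping the nonnegative memory term) and the limit on the right then produces exactly \eqref{BasEst2}; the inequality directions are consistent, since the Fatou estimate bounds the limiting $k$-weighted term from below, which is precisely the direction required to keep it on the smaller side of \eqref{BasEst2}.
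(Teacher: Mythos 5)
Your proof is correct and follows essentially the same route as the paper: drop the nonnegative triple integral in \eqref{BasEst1}, send $n\to\infty$ term by term, and convert $\int_0^T\int_\Omega f\,(l\astT w)\,dx\,dt$ into $\int_0^T\int_\Omega (l\ast f)\,w\,dx\,dt$ via the duality property \eqref{dualprop}. The paper's own proof leaves the convergence details implicit, and your Fatou/subsequence treatment of the $k_n(T-t)+k_n(t)$ term is exactly the device the paper spells out later in the proof of Corollary \ref{BasicEstimate3}.
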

\begin{proof}
	The assertion follows directly from \eqref{BasEst1} by first dropping the (nonnegative) triple integral term, then sending $n\to \infty$ and finally using the duality property \eqref{dualprop}.
\end{proof}

\begin{remark} {\em 
    In the proof of Lemma \ref{BasicEstimate1}, the elliptic term can also be treated in a different way. Applying the duality identity \eqref{dualprop}, we have
    \[ \int_0^T \int_\Omega (h_n\ast\nabla w)\cdot \nabla (l \astT w)\,dx\,dt  = \int_0^T \int_\Omega (l\ast h_n\ast\nabla w)\cdot \nabla w\,dx\,dt  .\]
    Then, sending $n\to\infty$, as in the proof of Corollary \ref{BasicEstimate2}, yields the estimate
    \begin{equation}
	 \int_0^T \int_\Omega w u\,dx\,dt +
        \int_0^T \int_\Omega (l\ast\nabla w)\cdot \nabla w\,dx\,dt   \le \int_0^T \int_\Omega \big(u_0+l\ast f\big) w \,dx\,dt .
        \label{BasEst1.1}
	\end{equation}
}
    
\end{remark}

\medskip

In Corollary \ref{BasicEstimate2}, we did not exploit the nonnegative triple integral term from \eqref{BasEst1}. In order to obtain an estimate from it we are looking for a lower bound of
$-\dot{k}_n$ which allows to take the limit as $n\to \infty$.    

Let us assume that $k$ is completely monotone, a property that is satisfied by virtually all important examples of $\mathcal{PC}$-kernels. Then $k_n$ is convex in $(0,\infty)$ for all $n\in \iN$, see the end of Section \ref{Prelim}. For $t>0$ we therefore have
\[
k_n(2t)\ge k_n (t)+\dot{k}_n(t) t,
\]
which yields the lower bound
\begin{equation} \label{knbound}
	-\dot{k}_n(t)\ge \frac{1}{t} \big(k_n(t)-k_n(2t)\big),\quad t>0.
\end{equation}
Note that the right-hand side in \eqref{knbound} is positive.

These considerations lead to the following result.
\begin{corollary} \label{BasicEstimate3}
	Suppose that the assumptions from Lemma \ref{BasicEstimate1} are satisfied and assume in addition that $k$ is completely monotone. Then
	\begin{align}
		&\int_0^T \int_\Omega w u\,dx\,dt+
		\frac{1}{2}\,\int_0^T \int_\Omega \big[k(T-t)+ k(t)\big]| l \astT \nabla w|^2\,dx\,dt\nonumber\\
		& +\frac{1}{2}\int_0^T \int_0^t\int_\Omega \frac{1}{s} \big(k(s)-k(2s)\big)
		\big|(l \astT \nabla w)(t,x)-(l \astT \nabla w)(t-s,x)\big|^2\,dx\,ds\,dt\nonumber\\
		& \le \int_0^T \int_\Omega \big(u_0+l\ast f\big) w \,dx\,dt.
		\label{BasEst3}
	\end{align}
\end{corollary}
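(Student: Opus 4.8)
The plan is to retain the full triple integral in the estimate \eqref{BasEst1} of Lemma \ref{BasicEstimate1}, which was simply discarded when proving Corollary \ref{BasicEstimate2}. Since $k$ is completely monotone, each kernel $k_n$ is convex on $(0,\infty)$ (see the end of Section \ref{Prelim}), so the pointwise bound \eqref{knbound} applies. As the factor $\big|(l\astT\nabla w)(t,x)-(l\astT\nabla w)(t-s,x)\big|^2$ is nonnegative, replacing $[-\dot k_n(s)]$ by the smaller nonnegative quantity $\tfrac1s\big(k_n(s)-k_n(2s)\big)$ only decreases the triple integral. Thus \eqref{BasEst1} gives, for every $n\in\iN$,
\begin{align*}
\int_0^T\int_\Omega w\,(h_n\ast u)\,dx\,dt
&+\frac12\int_0^T\int_\Omega\big[k_n(T-t)+k_n(t)\big]\,|l\astT\nabla w|^2\,dx\,dt\\
&+\frac12\int_0^T\int_0^t\int_\Omega\frac{k_n(s)-k_n(2s)}{s}\,\big|(l\astT\nabla w)(t,x)-(l\astT\nabla w)(t-s,x)\big|^2\,dx\,ds\,dt\le D_n,
\end{align*}
where $D_n$ stands for the right-hand side of \eqref{BasEst1}.

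The next step is to let $n\to\infty$. The first term on the left and the whole of $D_n$ are treated exactly as in the proof of Corollary \ref{BasicEstimate2}: from $h_n\ast u\to u$ in $L_2(\Omega_T)$ one gets $\int_0^T\int_\Omega w\,(h_n\ast u)\to\int_0^T\int_\Omega wu$, while the convergences $h_n\ast f\to f$, $h_n\ast\nabla w\to\nabla w$ and $h_n\astT\nabla w\to\nabla w$ in $L_2$ (the latter killing the remainder term), together with $1\ast h_n\to1$ and the duality property \eqref{dualprop}, yield $D_n\to\int_0^T\int_\Omega(u_0+l\ast f)\,w\,dx\,dt$.

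For the two remaining left-hand terms I would use Fatou's lemma. The key input is that $k_n\to k$ almost everywhere on $(0,2T)$ along a subsequence: indeed $k_n=h_n\ast k\to k$ in $L_1((0,2T))$, because $h_n\ast g\to g$ in $L_1$ for every $g\in L_1$ and $k\in L_{1,loc}(\iR_+)$, and $L_1$-convergence furnishes an a.e.\ convergent subsequence $(k_{n_j})$. Along it, the nonnegative weights $k_{n_j}(t)$, $k_{n_j}(T-t)$ and $\tfrac1s\big(k_{n_j}(s)-k_{n_j}(2s)\big)$ converge a.e.\ to $k(t)$, $k(T-t)$ and $\tfrac1s\big(k(s)-k(2s)\big)$, so Fatou gives
\[
\liminf_{j\to\infty}\frac12\int_0^T\int_\Omega\big[k_{n_j}(T-t)+k_{n_j}(t)\big]\,|l\astT\nabla w|^2\,dx\,dt\ge\frac12\int_0^T\int_\Omega\big[k(T-t)+k(t)\big]\,|l\astT\nabla w|^2\,dx\,dt,
\]
and the analogous lower bound for the triple integral. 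Passing to $\liminf_{j\to\infty}$ in the displayed inequality, and using that the first left-hand term and $D_{n_j}$ converge while $\liminf(a_j+b_j)\ge\liminf a_j+\liminf b_j$, produces exactly \eqref{BasEst3}.

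I expect the passage to the limit in the triple integral to be the only real difficulty. The derivative $-\dot k_n$ does not converge to $-\dot k$ in any mode strong enough to pass to the limit directly, and $-\dot k$ itself may fail to be locally integrable near $t=0$; this is why one cannot simply send $n\to\infty$ in \eqref{BasEst1}. Replacing $-\dot k_n$ by the difference quotient $\tfrac1s\big(k_n(s)-k_n(2s)\big)$ removes this obstruction, since the latter depends only on values of $k_n$, for which a.e.\ convergence is available, and Fatou then applies. Convexity of $k_n$, guaranteed by complete monotonicity of $k$, is precisely the structural property that legitimizes the bound \eqref{knbound} and thereby the whole argument.
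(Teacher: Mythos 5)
Your proposal is correct and follows essentially the same route as the paper's own proof: start from Lemma \ref{BasicEstimate1}, use convexity of $k_n$ (coming from complete monotonicity of $k$) to invoke the lower bound \eqref{knbound}, and pass to the limit via Fatou's lemma along a subsequence on which $k_{n_j}\to k$ a.e., extracted from the $L_1$-convergence $k_n\to k$. Your extra care in asking for a.e.\ convergence on $(0,2T)$ (to cover the values $k_{n_j}(2s)$) and in spelling out the limit of the remaining terms as in Corollary \ref{BasicEstimate2} only makes the argument more complete than the paper's terse version.
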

\begin{proof}
	The assertion is a consequence of Lemma \ref{BasicEstimate1}, the lower bound \eqref{knbound}
	and the fact that $k_n\to k$ in $L_1((0,T))$ as $n\to \infty$. In fact, the latter implies that for a suitable subsequence $(n_j)_{j\in \iN}$ we have pointwise convergence a.e.\ of $k_{n_j}$ to $k$ on
	$(0,T)$ and hence we can apply Fatou's lemma to the integral
	\[
	\int_0^T \int_0^t\int_\Omega \frac{1}{s} \big(k_{n_j}(s)-k_{n_j}(2s)\big)
	\big|(l \astT \nabla w)(t,x)-(l \astT \nabla w)(t-s,x)\big|^2\,dx\,ds\,dt.
	\]
\end{proof}
\begin{example}\label{PME_frac}
	{\em
		Let us consider the pair of standard kernels $(k,l)=(g_{1-\alpha},g_\alpha)$ with $\alpha\in (0,1)$.
		Then
		\[
		k(t)-k(2t)=\frac{t^{-\alpha}}{\Gamma(1-\alpha)}\big(1-2^{-\alpha}\big)=:c(\alpha)t^{-\alpha},\quad t>0.
		\]
		and thus the triple integral term in \eqref{BasEst3} takes the form
		\begin{equation}
			\frac{c(\alpha)}{2} \int_0^T \int_0^t\int_\Omega s^{-\alpha-1}
			\big|(g_\alpha \astT \nabla w)(t,x)-(g_\alpha \astT \nabla w)(t-s,x)\big|^2\,dx\,ds\,dt.
		\end{equation}
Using \cite[Lemma 2.2]{AlCaVa}, the last term, together with the
second term on the left-hand side of \eqref{BasEst3}, gives a seminorm estimate for $g_\alpha\astT \nabla w$ in $H^\frac{\alpha}{2}((0,T);L_2(\Omega))$.

        Alternatively, we can use \eqref{BasEst1.1} with $l=g_\alpha$, together with the inequality
        \begin{equation} \label{galphaInequality}
        \int_0^T v(g_\alpha\ast v)(t) \, dt \geq \gamma_\alpha \int_0^T |(g_{\alpha/2}\ast v)(t)|^2 dt ,\end{equation}
        $\alpha\in (0,1)$, $\gamma_\alpha=\cos(\frac{\alpha \pi}{2})$, see \cite[Lemma 3.1]{MuSchö}, to find that
        \begin{equation}
	 \int_0^T \int_\Omega w u\,dx\,dt +
       \gamma_\alpha \int_0^T \int_\Omega |g_{\alpha/2}\ast \nabla w|^2 \,dx \,dt  \le \int_0^T \int_\Omega \big(u_0+l\ast f\big) w \,dx\,dt .
        \label{BasEst1.2}
	\end{equation}
    The second term in \eqref{BasEst1.2} yields a seminorm estimate for $w$ in the space $H^{-\alpha/2}_2((0,T);H^1_2(\Omega))$, which, in terms of differentiation orders, is comparable to the estimate $g_\alpha\astT \nabla w$ in $H^\frac{\alpha}{2}((0,T);L_2(\Omega))$.
	}
\end{example}
\begin{remark}
{\em
Note that in the weak formulation of the equation 
\begin{equation} \label{SubProbS}
	\partial_t \big(k\ast [u-u_0]\big)-\Delta w= f,\quad (t,x)\in \Omega_T,
\end{equation}
we do not require the nonnegativity of the test function $\eta$. Hence, for weak solutions $(u,w)$, the estimates from Lemma \ref{BasicEstimate1},
Corollary \ref{BasicEstimate2} and Corollary \ref{BasicEstimate3} remain valid when the assumption
$w\ge0$ is dropped.
}
\end{remark}
\section{A priori estimates for subdiffusive porous medium type equations} \label{FPME}
We now apply Corollary \ref{BasicEstimate2} to nonnegative weak subsolutions of the equation 
\begin{equation} \label{PMED}
	\partial_t \big(k\ast [u-u_0]\big)-\Delta \big(a u^m\big)= f,\quad (t,x)\in \Omega_T,
\end{equation}
where $m\ge 1$, $\Omega$ is again a (sufficiently smooth) bounded domain in $\iR^N$ and the function $a$ is assumed to satisfy
\begin{equation} \label{acond}
	0<a_1\le a(t,x)\le a_2,\quad \mbox{a.a.}\,(t,x)\in \Omega_T,
\end{equation}
with two constants $a_1,a_2>0$. An important special case is given by $a(t,x)=1$ in $\Omega_T$. We consider \eqref{PMED} together with homogeneous
Dirichlet or Neumann conditions and the initial condition $u|_{t=0}=u_0$ in $\Omega$.

Inserting $w=au^m$ in \eqref{weakform2} we obtain for a.a.\ $t\in (0,T)$ and $n\in \iN$
\begin{align} \label{weakformPME}
	\int_\Omega \Big(\psi \partial_t\big(k_n\ast [u-u_0]\big)+[h_n\ast\nabla (au^m)]\cdot \nabla \psi\Big)\,dx\le \int_\Omega (h_n\ast f)\psi\,dx.
\end{align}
The boundary condition (Dirichlet or Neumann) is encoded by further conditions on $u$ and the space of test functions $\psi$, respectively.
\begin{theorem} \label{PMEthm}
	Let $m\ge 1$, $u_0\in L_{m+1}(\Omega)$, $f\in L_{m+1}(\Omega_T)$ and 
	$a\in L_\infty(\Omega_T)$ such that \eqref{acond} is satisfied.
	Assume that
	one of the following conditions is satisfied.
	\begin{itemize}
		\item[(D)] $u\ge 0$ in $\Omega_T$, $au^m\in L_2((0,T);\mathring{H}^1_2(\Omega))$ and 
		\eqref{weakformPME} holds for all nonnegative $\psi\in \mathring{H}^1_2(\Omega))$. 
		\item[(N)] $u\ge 0$ in $\Omega_T$, $au^m\in L_2((0,T);{H}^1_2(\Omega))$ and 
		\eqref{weakformPME} holds for all nonnegative $\psi\in {H}^1_2(\Omega))$. 
	\end{itemize}
	Assume further that
	$k\ast u\in C([0,T];L_2(\Omega))$ such that $(k\ast u)|_{t=0}=0$.
	
	Then
	\begin{align}
		\int_0^T \int_\Omega  &u^{m+1}\,dx\,dt+
		\frac{1}{a_1}\,\int_0^T \int_\Omega \big[k(T-t)+ k(t)\big]\big| l \astT \nabla (au^m)\big|^2\,dx\,dt\nonumber\\
		& \le C(m) \Big(\frac{a_2}{a_1}\Big)^{m+1}\Big(T\norm{u_0}_{L_{m+1}(\Omega)}^{m+1}
		+\norm{l}_{L_1((0,T))}^{m+1}
		\norm{f}_{L_{m+1}(\Omega_T)}^{m+1}\Big),
        \label{PME_est}
	\end{align}
	where $C(m)=\frac{2(4m)^m}{(m+1)^{m+1}}$.
\end{theorem}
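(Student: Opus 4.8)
The plan is to apply Corollary \ref{BasicEstimate2} with the choice $w=au^m$ and then convert the resulting inequality into \eqref{PME_est} by means of Young's inequalities, with a balanced choice of a scaling parameter so that the sharp constant $C(m)$ comes out. First I would check applicability of the corollary. Since $u\ge 0$ and $a>0$ we have $w=au^m\ge 0$, the weak inequality \eqref{weakformPME} is precisely \eqref{weakform2} for this $w$, and $w\in L_2((0,T);\mathring H^1_2(\Omega))$ (resp.\ $H^1_2(\Omega)$) by hypothesis. The remaining integrability requirements of Lemma \ref{BasicEstimate1} are automatic: $au^m\in L_2((0,T);H^1_2(\Omega))$ together with $a\ge a_1$ forces $u\in L_{2m}(\Omega_T)$, and since $m\ge 1$ and $\Omega_T$ has finite measure this yields $u\in L_2(\Omega_T)\cap L_{m+1}(\Omega_T)$; likewise $u_0\in L_{m+1}(\Omega)\subset L_2(\Omega)$ and $f\in L_{m+1}(\Omega_T)\subset L_2(\Omega_T)$.

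With $w=au^m$, Corollary \ref{BasicEstimate2} reads
\[
\int_0^T\!\int_\Omega a u^{m+1}\,dx\,dt + \frac12\int_0^T\!\int_\Omega \big[k(T-t)+k(t)\big]\,\big|l\astT\nabla(au^m)\big|^2\,dx\,dt \le \int_0^T\!\int_\Omega (u_0 + l\ast f)\,au^m\,dx\,dt.
\]
On the left I would bound $au^{m+1}\ge a_1 u^{m+1}$. On the right, since $au^m\ge 0$, $u_0\ge 0$ and $l\ge 0$, I replace $u_0+l\ast f$ by its positive part and then by $u_0+l\ast|f|$, and use $a\le a_2$, to get that the right-hand side is at most $a_2\int_0^T\int_\Omega (u_0+l\ast|f|)\,u^m\,dx\,dt$.

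The key step is a weighted Young inequality applied pointwise to $(u_0+l\ast|f|)\,u^m$ with exponents $\tfrac{m+1}{m}$ and $m+1$ and a free parameter $c>0$:
\[
(u_0+l\ast|f|)\,u^m \le \frac{m}{m+1}\,c^{\frac{m+1}{m}}\,u^{m+1} + \frac{1}{(m+1)\,c^{m+1}}\,(u_0+l\ast|f|)^{m+1}.
\]
I would then choose $c$ so that $a_2\cdot\tfrac{m}{m+1}c^{\frac{m+1}{m}}=\tfrac{a_1}{2}$, i.e.\ exactly half of the coefficient $a_1$ of $\int u^{m+1}$ produced on the left, so that the $u^{m+1}$-contribution coming from the right can be absorbed. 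After absorbing and multiplying through by $2/a_1$, the left-hand side becomes precisely $\int u^{m+1}+\tfrac{1}{a_1}\int[k(T-t)+k(t)]|l\astT\nabla(au^m)|^2$, and the prefactor of the data term works out to $\tfrac{2a_2}{a_1(m+1)c^{m+1}}=\tfrac{2^{m+1}m^m}{(m+1)^{m+1}}(a_2/a_1)^{m+1}$.

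Finally I would estimate the data term $\int_0^T\int_\Omega (u_0+l\ast|f|)^{m+1}$. By the elementary convexity bound $(x+y)^{m+1}\le 2^m(x^{m+1}+y^{m+1})$ it splits into $2^m$ times the sum of $\int_0^T\int_\Omega u_0^{m+1}=T\norm{u_0}_{L_{m+1}(\Omega)}^{m+1}$ (as $u_0$ is independent of $t$) and $\int_0^T\int_\Omega(l\ast|f|)^{m+1}$, the latter being at most $\norm{l}_{L_1((0,T))}^{m+1}\norm{f}_{L_{m+1}(\Omega_T)}^{m+1}$ by Young's convolution inequality in the time variable. Collecting constants, the extra factor $2^m$ upgrades $\tfrac{2^{m+1}m^m}{(m+1)^{m+1}}$ to $\tfrac{2^{2m+1}m^m}{(m+1)^{m+1}}=\tfrac{2(4m)^m}{(m+1)^{m+1}}=C(m)$, which gives \eqref{PME_est}. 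The main point requiring care is not a conceptual difficulty but the bookkeeping of constants, namely the balanced choice of $c$ and the tracking of the factor $2^m$ from the convexity bound, so that the stated $C(m)$ is reproduced exactly; the sign handling of $u_0+l\ast f$ on the right-hand side is the only other place to be attentive.
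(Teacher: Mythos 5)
Your proposal is correct and follows essentially the same route as the paper: apply Corollary \ref{BasicEstimate2} with $w=au^m$, absorb the $u^{m+1}$-contribution via a Young inequality with a balanced parameter, and bound the data term; your pointwise Young inequality combined with $(x+y)^{m+1}\le 2^m(x^{m+1}+y^{m+1})$ and Young's convolution inequality is just a reshuffling of the paper's H\"older-then-Young bookkeeping (the paper applies H\"older on $\Omega_T$ first and then Young at the level of norms with parameter $\delta=\frac{a_1(m+1)}{4m}$), and both yield the identical constant $C(m)$. The one slip is your invocation of $u_0\ge 0$, which is not a hypothesis of the theorem; the correct step, as in the paper, is to bound $u_0\le |u_0|$ and $l\ast f\le l\ast|f|$ pointwise (valid since $au^m\ge 0$ and $l\ge 0$), after which your entire argument goes through verbatim with $|u_0|$ in place of $u_0$, at no cost because $\norm{\,|u_0|\,}_{L_{m+1}(\Omega)}=\norm{u_0}_{L_{m+1}(\Omega)}$.
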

\begin{proof}
	Note first that our assumptions imply that $u\in L_{2m}(\Omega_T)\subset L_{m+1}(\Omega_T)$. Indeed, this follows from $au^m\in L_2((0,T);{H}^1_2(\Omega))\subset L_2(\Omega_T)$ and the assumption that $a$ is bounded away from zero.
	
	We apply Corollary \ref{BasicEstimate2} with $w=au^m$ and obtain the estimate
	\begin{align}
		\int_0^T \int_\Omega a u^{m+1}\,dx\,dt+&
		\frac{1}{2}\,\int_0^T \int_\Omega \big[k(T-t)+ k(t)\big]\big| l \astT \nabla (au^m)\big|^2\,dx\,dt\nonumber\\
		& \le \int_0^T \int_\Omega \big(u_0+l\ast f\big) au^m \,dx\,dt.\label{PME1a}
	\end{align}
	We estimate the integral on the right-hand side using H\"older's and Young's inequality.
	\begin{align}
		\int_0^T &\int_\Omega \big(u_0+l\ast f\big) au^m \,dx\,dt\le a_2
		\int_0^T \int_\Omega \big(|u_0|+l\ast |f|\big) u^m \,dx\,dt\nonumber\\
		& \le a_2 \Big(T^{\frac{1}{m+1}}\norm{u_0}_{L_{m+1}(\Omega)}+\norm{l}_{L_1((0,T))}
		\norm{f}_{L_{m+1}(\Omega_T)}\Big)\norm{u}_{L_{m+1}(\Omega_T)}^{m}\nonumber\\
		& \le \frac{\delta^{-m}}{m+1}a_2^{m+1}\Big(T\norm{u_0}_{L_{m+1}(\Omega)}^{m+1}
		+\norm{l}_{L_1((0,T))}^{m+1}
		\norm{f}_{L_{m+1}(\Omega_T)}^{m+1}\Big)+\frac{2m\delta }{m+1}\norm{u}_{L_{m+1}(\Omega_T)}^{m+1}\label{PME1b},
	\end{align}
	where $\delta>0$ can be chosen arbitrarily. Estimating the first integral on the left of \eqref{PME1a}
	from below by $a_1 \norm{u}_{L_{m+1}(\Omega_T)}^{m+1}$ and combining the resulting inequality with \eqref{PME1b}, where we choose $\delta=\frac{a_1(m+1)}{4m}$, we obtain
	\begin{align*}
		\int_0^T \int_\Omega  &u^{m+1}\,dx\,dt+
		\frac{1}{a_1}\,\int_0^T \int_\Omega \big[k(T-t)+ k(t)\big]\big| l \astT \nabla (au^m)\big|^2\,dx\,dt\nonumber\\
		& \le \frac{2\delta^{-m}}{a_1(m+1)}a_2^{m+1}\Big(T\norm{u_0}_{L_{m+1}(\Omega)}^{m+1}
		+\norm{l}_{L_1((0,T))}^{m+1}
		\norm{f}_{L_{m+1}(\Omega_T)}^{m+1}\Big),
	\end{align*}
	which implies the assertion.
\end{proof}
\begin{remark} {\em 
    In the case of the standard kernel, that is, $(k,l)=(g_{1-\alpha}, g_\alpha)$ with $\alpha\in (0,1)$, Theorem \ref{PMEthm} remains true if the second term on the left-hand side of \eqref{PME_est} is replaced with
    \[\frac{2\gamma_\alpha}{a_1} \int_0^T \int_\Omega |g_{\alpha/2}\ast \nabla (au^m)|^2 \,dx \,dt .\]
    See Example \ref{PME_frac}.
    }
\end{remark}
\begin{remark}
    {\em Theorem \ref{PMEthm} provides an estimate for nonnegative weak subsolutions of \eqref{PMED}. By an analogous argument one can obtain a corresponding result for (possibly sign-changing) weak solutions of 
\begin{equation} \label{PMED2}
	\partial_t \big(k\ast [u-u_0]\big)-\Delta \big(a |u|^{m-1}u\big)= f,\quad (t,x)\in \Omega_T.
\end{equation}
    }
\end{remark}

\section{Uniqueness of weak solutions}
The duality estimates from Section \ref{dualityestimates} are also useful to establish uniqueness results for sub\-diffusive porous medium type problems. 

Let $\Omega$ be a bounded domain in $\iR^N$ and $\Phi:\,\iR\to \iR$ be continuous and strictly increasing with $\Phi(0)=0$. We consider the problem
\begin{equation} \label{subdiffFilt}
	\left\{ \begin{array}{r@{\,=\,}l}
		\partial_t \big(k\ast [u-u_0]\big)-\Delta\big( a \Phi(u)\big)& f,\quad (t,x)\in \Omega_T,\\
		u(t,x) & 0,\quad t\in (0,T),\,x\in \partial\Omega,\\
		u(0,x) & u_0(x),\quad x\in \Omega.
	\end{array} \right.
\end{equation}
\begin{theorem} \label{uniquetheorem}
Let $u_0\in L_2(\Omega)$, and $f\in L_{2}(\Omega_T)$. Then there is at most 
one weak solution $u$ of \eqref{subdiffFilt} in the following sense:
$u\in L_2(\Omega_T)$, $k\ast u\in C([0,T];L_2(\Omega))$ satisfying $(k\ast u)|_{t=0}=0$,
$a\Phi(u)\in L_2((0,T);\mathring{H}^1_2(\Omega))$ and
\begin{align} \label{weakformsubdiffFilt}
	\int_\Omega \Big(\psi \partial_t\big(k_n\ast [u-u_0]\big)+[h_n\ast\nabla (a\Phi(u))]\cdot \nabla \psi\Big)\,dx= \int_\Omega (h_n\ast f)\psi\,dx,
\end{align}
for all $\psi\in \mathring{H}^1_2(\Omega)$ and $n\in \iN$.
\end{theorem}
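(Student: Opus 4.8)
The plan is to exploit the structure already built into the regularized weak formulation together with the monotonicity of $\Phi$. Suppose $u_1$ and $u_2$ are two weak solutions of \eqref{subdiffFilt} in the stated sense, and set
\[
u:=u_1-u_2,\qquad w:=a\Phi(u_1)-a\Phi(u_2).
\]
Since \eqref{weakformsubdiffFilt} is linear in the quadruple $(u,\nabla(a\Phi(u)),u_0,f)$ for each fixed $n$, subtracting the identity \eqref{weakformsubdiffFilt} for $u_2$ from the one for $u_1$ yields, for every $n\in\iN$ and a.a.\ $t\in(0,T)$,
\[
\int_\Omega\Big(\psi\,\partial_t\big(k_n\ast u\big)+[h_n\ast\nabla w]\cdot\nabla\psi\Big)\,dx=0,\qquad \psi\in\mathring{H}^1_2(\Omega).
\]
This is precisely \eqref{weakform2} for the pair $(u,w)$ with data $u_0=0$ and $f=0$. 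First I would record that $(u,w)$ meets all the structural hypotheses of Lemma \ref{BasicEstimate1}: indeed $u\in L_2(\Omega_T)$, $k\ast u\in C([0,T];L_2(\Omega))$ with $(k\ast u)|_{t=0}=0$ (both by linearity, since each $u_i$ has these properties), and $w=a\Phi(u_1)-a\Phi(u_2)\in L_2((0,T);\mathring{H}^1_2(\Omega))$ as a difference of two functions in that space.

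Next I would invoke Corollary \ref{BasicEstimate2}. The subtle point is that $w$ need not be nonnegative, so the sign assumption in (D) is not available; however, the pair $(u,w)$ satisfies the \emph{equality} \eqref{SubProbS} in the weak sense, and by the Remark at the end of Section \ref{dualityestimates} the estimate \eqref{BasEst2} remains valid for weak solutions without requiring $w\ge0$. Applying it with $u_0=0$ and $f=0$, so that the right-hand side of \eqref{BasEst2} vanishes, gives
\[
\int_0^T \int_\Omega w u\,dx\,dt+\frac{1}{2}\,\int_0^T \int_\Omega \big[k(T-t)+ k(t)\big]\,| l \astT \nabla w|^2\,dx\,dt\le 0.
\]

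Finally I would close the argument using monotonicity. Pointwise on $\Omega_T$,
\[
wu=a\,\big(\Phi(u_1)-\Phi(u_2)\big)\big(u_1-u_2\big)\ge 0,
\]
since $a>0$ by \eqref{acond} and $\Phi$ is increasing; the second integral in the displayed inequality is manifestly nonnegative. Hence both terms must vanish, and in particular $\int_0^T\int_\Omega wu\,dx\,dt=0$ together with $wu\ge0$ forces $wu=0$ a.e. Because $\Phi$ is \emph{strictly} increasing, $\big(\Phi(u_1)-\Phi(u_2)\big)(u_1-u_2)=0$ holds only where $u_1=u_2$, so $u_1=u_2$ a.e.\ in $\Omega_T$, which is the claimed uniqueness. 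The genuine work of the proof—the time regularization that turns the $\mathcal{PC}$-kernel $k$ into the smooth kernels $k_n$ and legitimizes the integration by parts in time—has already been carried out in the passage from \eqref{weakform} to \eqref{weakform2} and in Lemma \ref{BasicEstimate1}; the only new obstacle here is the justification that the equality version of Corollary \ref{BasicEstimate2} may be applied to the sign-changing difference $w$, which is exactly what the Remark secures, after which the monotonicity of $\Phi$ supplies the decisive nonnegativity $wu\ge0$.
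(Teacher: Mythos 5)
Your proposal is correct and follows essentially the same route as the paper's own proof: form the difference $(u,w)=(u_1-u_2,\,a\Phi(u_1)-a\Phi(u_2))$, apply Corollary \ref{BasicEstimate2} to the resulting homogeneous problem to get $\int_0^T\int_\Omega wu\,dx\,dt\le 0$, and conclude from $a>0$ and the strict monotonicity of $\Phi$. You are in fact slightly more explicit than the paper on the one delicate point, namely that the sign-changing $w$ is admissible because the Remark at the end of Section \ref{dualityestimates} removes the hypothesis $w\ge 0$ for weak solutions of the equality \eqref{SubProbS}.
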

\begin{proof}
	Suppose that $u_1$ and $u_2$ are weak solutions of \eqref{subdiffFilt}. We set $u=u_1-u_2$,
	$w_i=a\Phi(u_i)$, $i=1,2$, and $w=w_1-w_2$. Then we have
	\begin{equation} \label{subdiffFilt2}
		\left\{ \begin{array}{r@{\,=\,}l}
			\partial_t \big(k\ast u\big)-\Delta w& 0,\quad (t,x)\in \Omega_T,\\
			u(t,x) & 0,\quad t\in (0,T),\,x\in \partial\Omega,\\
			u(0,x) & 0,\quad x\in \Omega,
		\end{array} \right.
	\end{equation}
	in the weak sense. Applying Corollary \ref{BasicEstimate2} we deduce that
	\[
	\int_0^T \int_\Omega wu\,dx\,dt\le 0.
	\]
	On the other hand, since $\Phi$ is increasing and $a\ge a_1>0$, we also have
	\[
	\int_0^T \int_\Omega wu\,dx\,dt=\int_0^T \int_\Omega a\big(\Phi(u_1)-\Phi(u_2)\big)\big(u_1-u_2\big)\,dx\,dt\ge 0.
	\]
	By the strict monotonicity of $\Phi$ it follows that $u_1=u_2$. 
\end{proof}
\begin{remark}
	{\em
		In the case $a=1$, the proof of Theorem \ref{uniquetheorem} can be viewed as the precise subdiffusive analogue of the proof of \cite[Theorem 5.3]{Vaz} on uniqueness of weak solutions for the problem
\begin{equation*}
		\left\{ \begin{array}{r@{\,=\,}l}
			\partial_t u-\Delta \Phi(u)& f,\quad (t,x)\in \Omega_T,\\
			u(t,x) & 0,\quad t\in (0,T),\,x\in \partial\Omega,\\
			u(0,x) & u_0(x),\quad x\in \Omega.
		\end{array} \right.
	\end{equation*}
	}
\end{remark}
\begin{remark} \label{PloRem}
{\em
A uniqueness result for weak solutions to one-dimensional time-fractional porous medium type equations 
\begin{equation} \label{PloProb}
\partial_t^\alpha (u-u_0)-\partial_x \big(D(u)u_x)=0
\end{equation}
with $D\in C^1(\iR)$, $D(0)=0$, and $D(r)>0$ as well as $D'(r)>0$ for $r>0$ was proven
in \cite[Theorem 1]{LoOPR}. Note that by defining $\Phi(r)=\int_0^r D(s)\,ds$, \eqref{PloProb} can be rewritten in our form with $a=1$.  
The basic idea of the argument given in \cite[Theorem 1]{LoOPR} is essentially the same as ours. However, the reasoning for the nonnegativity of the elliptic term is incorrect, the
last equality in \cite[formula (14)]{LoOPR} is not true. The estimate becomes correct, if one uses instead the inequality \eqref {galphaInequality}. Furthermore, the proof in \cite{LoOPR} is not rigorous  
with respect to the required time regularity of the test function. The authors' reference to standard mollification techniques in the book \cite{Vaz} is questionable at this point, since \cite{Vaz} does not consider operators such as the fractional derivative. We would like to point out that standard smoothing methods from classical parabolic theory, such as
Steklov averages, no longer all work in the subdiffusive case,
see \cite{Za-08}.
}
\end{remark}

\section{Subdiffusion-reaction systems} \label{chem}
In this section, we discuss an important application of the duality estimates in the context of subdiffusion-reaction systems. For the sake of simplicity we will restrict ourselves to a single reversible
reaction with two species on both sides. We point out that our results easily extend to general reversible reactions and also to systems of chemical reactions under mass action kinetics with control of mass. We refer to the monograph \cite{Fe19} for a good account of chemical reaction network theory
and to \cite{Fi15,LiMi,MiHaMa,Pie11} for important results on reaction-diffusion systems assuming mass action kinetics.  

Let us consider a reversible chemical reaction
\begin{equation} \label{chemreac}
	\ce{a_1 A_1}+\ce{a_2 A_2 <=>[\nu_f][\nu_b]}\ce{a_3 A_3}+\ce{a_4 A_4}
\end{equation}
involving the chemical species $A_1,\ldots,A_4$ with stoichiometric coefficients $a_1,\ldots,a_4\in \iN$.
$\nu_f$ and $\nu_b$ stand for the reaction constants. Denoting the concentration of $A_i$ by $c_i$ and assuming mass action kinetics the reaction rate of
\eqref{chemreac} is given by
\begin{equation*}
	r(c_1,c_2,c_3,c_4)=-\nu_f \,c_1^{a_1} c_2^{a_2}+\nu_b\, c_3^{a_3} c_4^{a_4}.
\end{equation*}
If the reaction takes place in a bounded domain $\Omega$ and the molecules can diffuse, then
the corresponding reaction-diffusion system (in case of normal diffusion) reads  
\begin{equation} \label{RDclassic}
	\left\{ \begin{array}{r@{\,=\,}l}
		\partial_t c_1-d_1 \Delta c_1 & \;a_1 r(c_1,c_2,c_3,c_4),\\
		\partial_t c_2-d_2 \Delta c_2 & \;a_2 r(c_1,c_2,c_3,c_4),\\
		\partial_t c_3-d_3 \Delta c_3 & -a_3 r(c_1,c_2,c_3,c_4),\\
		\partial_t c_4-d_4 \Delta c_4 & -a_4 r(c_1,c_2,c_3,c_4),
	\end{array} \right.
\end{equation}
where $d_1,\ldots,d_4$ are the (in general different) diffusion coefficients. It is natural to consider \eqref{RDclassic} together with homogeneous Neumann boundary conditions
\begin{equation} \label{NBC}
	\partial_\nu c_i=0,\quad t>0,\,x\in \partial\Omega,\,i=1,\ldots,4.
\end{equation}
Assuming that the initial data $c_i^0$ (taken e.g.\ from $L_\infty(\Omega)$) are nonnegative, the solution of the reaction-diffusion system takes values in $[0,\infty)^4$ as long as it exists.
Short-time existence and uniqueness of smooth solutions is well-known. The question of global smooth
solutions in the general case is still an open problem. However, for quadratic reaction terms (that is, $a_i=1$ for $i=1,\ldots,4$), one has global existence of smooth solutions for arbitrary data 
and in any space dimension (\cite{CaGoVa,FeMoTa}). Global existence of renormalized solutions was established in \cite{Fi15}. If the reaction terms are integrable, then the renormalized solutions become weak solutions defined in the usual way. Concerning global weak
resp.\ generalized solutions we also refer to \cite{CaDeFe,DeFePiVo,LaPe,LaPi,LanWi}.

Duality estimates play an important role in deriving {\em a priori} estimates for the concentrations $c_i$ in
$L_p$-spaces. For example, in the case of \eqref{RDclassic} one can obtain a bound in $L_2(\Omega_T)$ for solutions
existing on the time-interval $[0,T]$, see e.g.\ \cite[Section 6]{Pie11} and \cite{DeFePiVo, LaPi}. The idea is to consider a suitable linear combination of the concentrations. Setting
\begin{align}
	u&:=a_3 c_1+a_4 c_2+a_1 c_3+a_2 c_4, \label{udef}\\
	u_0&:=a_3 c_1^0+a_4 c_2^0+a_1 c_3^0+a_2 c_4^0,\label{u0def}
\end{align}
and
\[
a:=\frac{d_1 a_3 c_1+d_2 a_4 c_2+d_3 a_1 c_3+d_4 a_2 c_4}{a_3 c_1+a_4 c_2+a_1 c_3+a_2 c_4}
\]
we find that
\begin{equation} \label{sumeq}
	\partial_t u-\Delta\big(a u)=0. 
\end{equation}
Observe that
\[
0<\min\limits_{i=1,\ldots,4} d_i\le a(t,x)\le \max\limits_{i=1,\ldots,4} d_i.
\]
Hence by the $L_2$-estimate for \eqref{localPDE} in the linear case $m=1$ one gets a bound
\[
\norm{u}_{L_2(\Omega_T)}\le C\norm{u_0}_{L_2(\Omega)},
\] 
which, thanks to the positivity of each $c_i$, yields an $L_2$-bound for each species.

This line of arguments extends to more general diffusion operators, including degenerate diffusion, see e.g.\ \cite{LaPi}. For example, if we replace in \eqref{RDclassic} the diffusion terms $d_i \Delta c_i$ by
$d_i \Delta \big(c_i^m\big)$ with some $m>1$, $i=1,\ldots,4$, then we obtain an estimate
\[
\norm{u}_{L_{m+1}(\Omega_T)}\le C\norm{u_0}_{L_{m+1}(\Omega)}.
\] 
Note that here
\begin{equation} \label{anew}
	a=\frac{d_1 a_3 c_1^m+d_2 a_4 c_2^m+d_3 a_1 c_3^m+d_4 a_2 c_4^m}{\big(a_3 c_1+a_4 c_2+a_1 c_3+a_2 c_4\big)^m}
\end{equation}
is again bounded from above and bounded away from zero. 

The mathematical modelling of chemical reactions in systems with anomalous diffusion is a challenging
task (\cite{Nepo}). According to \cite[page 2]{Nepo}, ``there is no universal model for reactions in a subdiffusive system; the description depends on the details of the underlying physics.''
If we start with the pure subdiffusion equation for a species with concentration $c$ in the
form with $(k,l)=(g_{1-\alpha},g_{\alpha})$, $\alpha\in (0,1)$,
\[
\partial_t c-\partial_t^{1-\alpha}d\Delta c=0,
\]
see \cite{Metz}, then simply adding a reaction term $R(c)$ on the right-hand side (that is the reaction rate is independent of the diffusion) may lead to a physically inconsistent equation, one problem being that preservation of positivity may fail. We refer to \cite{Frö,HeLaWe,Law} for more details.  
Physically meaningful models include equations of the form
\[
\partial_t c-\partial_t^{1-\alpha}d\Delta c=\partial_t^{1-\alpha} R(c),
\]
which is equivalent to
\begin{equation} \label{ReactionSubdiff}
	\partial_t^\alpha(c-c_0)-d\Delta c=R(c).
\end{equation}
See the discussion in \cite{Nepo} and the derivation of such an equation for a specific reaction in 
\cite{SeWoTa}.

Let us consider the reaction \eqref{chemreac} in a subdiffusive regime. Using the form \eqref{ReactionSubdiff} with a more general $\mathcal{PC}$-kernel $k$ and
allowing also for a porous medium type diffusion operator ($m\ge 1$), the RD-system \eqref{RDclassic} turns into the system
\begin{equation} \label{RDnew}
	\left\{ \begin{array}{r@{\,=\,}l}
		\partial_t \big(k\ast [c_1-c_1^0]\big) -d_1 \Delta c_1^m & \;a_1 r(c_1,c_2,c_3,c_4),\\
		\partial_t \big(k\ast [c_2-c_2^0]\big)-d_2 \Delta c_2^m & \;a_2 r(c_1,c_2,c_3,c_4),\\
		\partial_t \big(k\ast [c_3-c_3^0]\big)-d_3 \Delta c_3^m & -a_3 r(c_1,c_2,c_3,c_4),\\
		\partial_t \big(k\ast [c_4-c_4^0]\big)-d_4 \Delta c_4^m & -a_4 r(c_1,c_2,c_3,c_4).
	\end{array} \right.
\end{equation}
We have the following result.
\begin{theorem} \label{subdiffsystemthm}
	Let $m\ge 1$ and $c_i^0\in L_{\infty}(\Omega)$ with $c_i^0\ge 0$, $i=1,\ldots,4$. Suppose that $(c_1,c_2,c_3,c_4)$
	is a sufficiently smooth solution of \eqref{RDnew} (together with \eqref{NBC}) on $(0,T]\times \Omega$ satisfying the initial conditions $c_i|_{t=0}=c_i^0$ in $\Omega$ for $i=1,\ldots,4$.
	Assume further that $c_i\ge 0$ in $\Omega_T$ for all $i=1,\ldots,4$.
	Then
	\begin{equation}
		\sum_{i=1}^4 \norm{c_i}_{L_{m+1}(\Omega_T)}\le C \sum_{i=1}^4 \norm{c_i^0}_{L_{m+1}(\Omega)},
	\end{equation}
	where the constant $C$ depends only on $m$, $T$, $\norm{l}_{L_1((0,T))}$, $a_i$, and the diffusion coefficients
	$d_i$, $i=1,\ldots,4$. 
\end{theorem}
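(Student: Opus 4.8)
The plan is to reduce the system to a single scalar subdiffusion equation of porous medium type and then invoke Theorem \ref{PMEthm}. Following the classical strategy recalled in the introduction (see \eqref{udef}--\eqref{sumeq}), I would form the linear combination
\[
u:=a_3 c_1+a_4 c_2+a_1 c_3+a_2 c_4,\qquad u_0:=a_3 c_1^0+a_4 c_2^0+a_1 c_3^0+a_2 c_4^0,
\]
which is designed precisely so that the reaction terms cancel. Indeed, multiplying the four equations in \eqref{RDnew} by $a_3,a_4,a_1,a_2$ respectively and summing, the right-hand side becomes $(a_1 a_3+a_2 a_4-a_3 a_1-a_4 a_2)\,r=0$, so that $u$ satisfies
\begin{equation} \label{sumeqnew}
\partial_t\big(k\ast[u-u_0]\big)-\Delta\big(a u^m\big)=0\quad\mbox{on}\;\Omega_T,
\end{equation}
where
\[
a:=\frac{d_1 a_3 c_1^m+d_2 a_4 c_2^m+d_3 a_1 c_3^m+d_4 a_2 c_4^m}{\big(a_3 c_1+a_4 c_2+a_1 c_3+a_2 c_4\big)^m},
\]
together with the homogeneous Neumann condition inherited from \eqref{NBC}.

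The next step is to verify that $a$ satisfies the two-sided bound \eqref{acond} with constants depending only on the $a_i$ and $d_i$. Since all $c_i\ge 0$, the denominator is $u^m$ with $u\ge 0$, and using the elementary inequality that for nonnegative reals the ratio $\frac{\sum b_i x_i^m}{(\sum b_i x_i)^m}$ (with $b_i>0$) is bounded between $\big(\min_i d_i\big)$-type and $\big(\max_i d_i\big)$-type constants, I would show
\[
0<a_1\le a(t,x)\le a_2,\qquad a_1,a_2\ \text{depending only on}\ m,\,a_i,\,d_i.
\]
Concretely, the upper bound follows from $b_i x_i^m\le (\sum_j b_j x_j)^m\cdot(\text{combinatorial factor})$ after normalizing, and the lower bound is clearest at $m=1$ (where $a$ is a convex combination of the $d_i$) and extends to $m\ge 1$ by homogeneity and positivity; the point is merely that $a$ is trapped strictly between positive constants, exactly as asserted in the discussion around \eqref{anew}.

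With \eqref{sumeqnew} and the bounds on $a$ in hand, I would apply Theorem \ref{PMEthm} (Neumann case, with $f=0$) to $u$. This requires checking the regularity hypotheses: since the $c_i$ are assumed sufficiently smooth and nonnegative, $u\ge 0$ lies in the relevant spaces, $au^m\in L_2((0,T);H^1_2(\Omega))$, and $(k\ast u)|_{t=0}=0$. Estimate \eqref{PME_est} with $f=0$ then yields
\[
\int_0^T\int_\Omega u^{m+1}\,dx\,dt\le C(m)\Big(\tfrac{a_2}{a_1}\Big)^{m+1}T\,\norm{u_0}_{L_{m+1}(\Omega)}^{m+1},
\]
i.e.\ $\norm{u}_{L_{m+1}(\Omega_T)}\le C\,\norm{u_0}_{L_{m+1}(\Omega)}$ with $C$ depending only on $m,T,a_i,d_i$. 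Finally, since each $c_i\ge 0$ and the coefficients $a_j$ in the definition of $u$ are positive integers, one has the pointwise comparison $c_i\le C'\,u$ for each $i$ (with $C'$ depending on the $a_j$), whence $\norm{c_i}_{L_{m+1}(\Omega_T)}\le C'\norm{u}_{L_{m+1}(\Omega_T)}$; summing and bounding $\norm{u_0}_{L_{m+1}(\Omega)}\le C''\sum_i\norm{c_i^0}_{L_{m+1}(\Omega)}$ gives the claim. The main obstacle is the uniform ellipticity of $a$: one must argue carefully that the two-sided bound holds uniformly in $(t,x)$ and for all $m\ge1$ despite the $m$th powers in numerator and denominator, and that the bounding constants are genuinely independent of the (a priori unknown) solution values $c_i$. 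Everything else is bookkeeping around the already-established scalar estimate.
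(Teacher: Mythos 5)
Your proof is correct and follows essentially the same route as the paper: you form the same linear combination $u=a_3c_1+a_4c_2+a_1c_3+a_2c_4$ to cancel the reaction terms, obtain the scalar equation with the coefficient $a$ from \eqref{anew}, argue that $a$ is bounded above and below by positive constants depending only on $m$, the $a_i$ and the $d_i$, and then apply Theorem \ref{PMEthm} (Neumann case, $f=0$) together with the nonnegativity of the $c_i$. You in fact give more detail than the paper on the two-sided bound for $a$ (which the paper merely asserts, referring to its earlier discussion), and your degree-zero homogeneity/positivity argument for that bound is sound.
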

\begin{proof}
	Consider $u$ and $u_0$ defined in \eqref{udef} and \eqref{u0def}, respectively. Then we have
	\[
	\partial_t \big(k\ast [u-u_0]\big)- \Delta \big(a u^m\big)=0\quad \mbox{in}\;\Omega_T,
	\]
	where $a$ is given by \eqref{anew}. As already noted before, $a$ is bounded from above and bounded away from zero, say $0<a_1\le a(t,x)\le a_2$. Thus we may apply Theorem \ref{PMEthm}, which yields an estimate 
	\[
	\norm{u}_{L_{m+1}(\Omega_T)}\le C \norm{u_0}_{L_{m+1}(\Omega)},
	\]
	where $C=C(m, T, |l|_{L_1((0,T))},a_1,a_2)$. The assertion now follows immediately as $c_i\ge 0$ in $\Omega_T$, $i=1,\ldots,4$.
\end{proof}
We remark that there is a vast literature on semilinear time-fractional (and more general subdiffusive) parabolic problems. We refer to \cite{GaWa} and the references therein. 

{\em Preservation of positivity.} In Theorem \ref{subdiffsystemthm}, we assumed that $c_i\ge 0$ in $\Omega_T$ for all $i=1,\ldots,4$. We strongly believe that this assumption can be dropped as one can expect system \eqref{RDnew}
to be positivity preserving. Concerning systems of subdiffusion-reaction equations such as \eqref{RDnew}, there appear to be no results on this property in the literature, even in the linear time-fractional case.

We give some indications as to why preservation of positivity is to be expected for \eqref{RDnew}, at least under some additional regularity assumptions. Assume that $k$ is completely monotone. 
Then, from Proposition \ref{comoprop}, we know $\lim_{t\to 0+}tk(t)=0$ and that $t \dot{k}(t)$ is locally integrable
on $[0,\infty)$.

Let $v:\,[0,T]\to \iR$ be sufficiently smooth, e.g.\ $v\in C^1([0,T])$. Using integration by parts, we find for $t\in (0,T)$ that
\begin{align}
	\frac{d}{dt}\big(k\ast [v-v(0)]\big)(t)& = \big(k\ast \dot{v}\big)(t)
	= \int_0^t k(t-\tau)\frac{d}{d\tau}\big(v(\tau)-v(t)\big)\,d\tau\nonumber\\
	&= k(t)\big(v(t)-v(0)\big)+\int_0^t [-\dot{k}(t-\tau)]\big(v(t)-v(\tau)\big)\,d\tau,\label{reform}
\end{align}
where we use that $k(t-\tau)(v(t)-v(\tau))\to 0$ as $\tau\to t-$. The identity \eqref{reform}
already appeared in \cite{AlCaVa,Lu} for the standard kernel $k=g_{1-\alpha}$, $\alpha\in (0,1)$.

Let now $c=(c_1,c_2,c_3,c_4)$ be a sufficiently smooth solution of \eqref{RDnew} on $(0,T]\times \Omega$, where we want to assume that the initial data $c_i^0$ are smooth and satisfy $c_i^0>0$ in all of $\bar{\Omega}$. Suppose
there exists a first time $t_*\in (0,T]$ where one of the components of  $c$, say the $i$-th one, becomes zero. Suppose further that this takes place at a point $(t_*,x_*)$ with $x_*\in \Omega$.
Then $c_i(t_*,x_*)=0$
and $c_j(t,x)\ge 0$ for all $j\in \{1,\ldots,4\}$, $t\in [0,t_*]$ and $x\in \bar{\Omega}$. This implies
$-\Delta(c_i^m)(t_*,x_*)\le 0$ and (using \eqref{reform})
\begin{align*}
	\partial_t \big(k\ast [c_i-c_i^0]\big)(t_*,x_*)& = k(t_*)\big(c_i(t_*,x_*)-c_i^0(x_*)\big)\\
	&\quad
	+\int_0^{t_*} [-\dot{k}(t_*-\tau)]\big(c_i(t_*,x_*)-c_i(\tau,x_*)\big)\,d\tau\\
	&\le -k(t_*)c_i^0(x_*)<0.
\end{align*}
Hence
\[
\partial_t \big(k\ast [c_i-c_i^0]\big)(t_*,x_*)-\Delta(c_i^m)(t_*,x_*)<0.
\]
On the other hand, the right-hand side of \eqref{RDnew} is quasipositive, that is, its $i$-th component
is nonnegative at $(t_*,x_*)$ because $c_i(t_*,x_*)=0$ and $c_j(t_*,x_*)\ge 0$ for all $j\neq i$.
This contradiction shows that the first vanishing of one of the components of $c$ can only happen at
$\partial\Omega$. If a suitable Hopf type lemma were available, one could use the homogeneous Neumann boundary condition to arrive again at a contradiction and thus show that all components of $c$ remain strictly positive. However, appropriate Hopf type lemmas seem to be missing in the literature. 

We conclude this section with an additional estimate in terms of entropy variables in the linear case. So let us assume that $m=1$ in \eqref{RDnew}. For the sake of simplicity we also assume that $\nu_f=\nu_b$, which ensures that $c_{*}=(1,1,1,1)$ is a homogeneous equilibrium of \eqref{RDnew}. Let $c_i(t,x)> 0$ and sufficiently smooth, $i=1,\ldots,4$. We introduce the entropy variables $z_i=\phi(c_i)$, $i=1,\ldots,4$, where
\[
\phi(s)=s\log s-s+1, \quad s> 0.
\]
Note that $\phi'(s)=\log s$ and $\phi(s)\ge 0$.
Multiplying the $i$-th equation in \eqref{RDnew} by $\log c_i$ we obtain
that  
\[
\log c_i\, \partial_t \big(k\ast [c_i-c_i^0]\big) -d_i \Delta z_i \le \mbox{sig}(i) a_i r(c_1,c_2,c_3,c_4)\log c_i,
\]
where sig$(i)=1$ for $i=1,2$ and sig$(i)=-1$ for $i=3,4$. In fact,
\[
\Delta z_i=\Delta\, \phi(c_i)=\log(c_i)\Delta c_i+\frac{1}{c_i}|\nabla c_i|^2.
\]
Since $\phi$ is convex, we can use \cite[Corollary 6.1]{KSVZ} (which also holds for more singular kernels provided $c_i$ is sufficiently smooth as we assume; alternatively one can regularize the kernel as above) to see that
\[
\log c_i\, \partial_t \big(k\ast [c_i-c_i^0]\big)\ge  \partial_t \big(k\ast [\phi(c_i)-\phi(c_i^0)]\big).
\]
Therefore
\[
\partial_t \big(k\ast [z_i-z_i^0]\big) -d_i \Delta z_i \le \mbox{sig}(i) a_i r(c_1,c_2,c_3,c_4)\log c_i,
\]
with $z_i^0=\phi(c_i^0)$. Summing over all $i$ and using that
\[
\sum_{i=1}^4 \mbox{sig}(i) a_i r(c_1,c_2,c_3,c_4)\log c_i\le 0,
\]
see e.g.\ \cite{Fe19, MiHaMa}, we finally obtain
\[
\partial_t \big(k\ast [z-z^0]\big) -\Delta( d z) \le 0,
\]
where
$z=\sum_{i=1}^4 z_i$, $z_0=\sum_{i=1}^4 z_i^0$, and $d=z^{-1} \sum_{i=1}^4 d_i z_i$.
Applying Theorem \ref{PMEthm}, we find the following result.
\begin{theorem} \label{entropyvar}
    Let $c_i^0\in L_{\infty}(\Omega)$ with $c_i^0> 0$, $i=1,\ldots,4$. Suppose that $(c_1,c_2,c_3,c_4)$
	is a sufficiently smooth solution of \eqref{RDnew} with $m=1$ (together with \eqref{NBC}) on $(0,T]\times \Omega$ satisfying the initial conditions $c_i|_{t=0}=c_i^0$ in $\Omega$ for $i=1,\ldots,4$.
	Assume further $\nu_f=\nu_b$ and that $c_i> 0$ in $\Omega_T$ for all $i=1,\ldots,4$.
	Then
	\begin{equation} \label{entest}
		\sum_{i=1}^4 \norm{z_i}_{L_{2}(\Omega_T)}\le C \sum_{i=1}^4 \norm{z_i^0}_{L_{2}(\Omega)},
	\end{equation}
	where the constant $C$ depends only on $m$, $T$, $\norm{l}_{L_1((0,T))}$ and the diffusion coefficients
	$d_i$, $i=1,\ldots,4$. 
\end{theorem}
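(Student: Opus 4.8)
The plan is to collapse the whole four-component system onto the single scalar subdiffusion inequality already derived in the discussion preceding the statement, namely
\[
\partial_t\big(k\ast[z-z^0]\big)-\Delta(dz)\le 0\quad\text{in }\Omega_T,
\]
together with the homogeneous Neumann condition, where $z=\sum_{i=1}^4 z_i\ge 0$, $z^0=\sum_{i=1}^4 z_i^0$ and $dz=\sum_{i=1}^4 d_i z_i$, and then to feed this into Theorem \ref{PMEthm} in the Neumann case (N) with $m=1$, $u=z$, $a=d$ and $f=0$.

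First I would record that $d$ obeys a two-sided bound of the form \eqref{acond}. Wherever $z>0$, the quantity $d=z^{-1}\sum_i d_i z_i$ is a convex combination of the $d_i$ with weights $z_i/z\ge 0$; on the set $\{z=0\}$ (where also $\sum_i d_i z_i=0$) I set $d:=\min_i d_i$, obtaining a bounded measurable coefficient with $0<\min_i d_i\le d\le\max_i d_i$ and $dz=\sum_i d_i z_i$ throughout $\Omega_T$. Next I would check the remaining hypotheses of Theorem \ref{PMEthm}. Positivity $u=z\ge 0$ is immediate from $\phi\ge 0$; smoothness and boundedness of the $c_i$ give $z_i\in L_\infty$, hence $dz=\sum_i d_i z_i\in L_2((0,T);H^1_2(\Omega))$ and $z^0\in L_2(\Omega)$; and since $k\in L_{1,loc}(\iR_+)$ and $z$ is bounded, $k\ast z\in C([0,T];L_2(\Omega))$ with $(k\ast z)|_{t=0}=0$. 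The weak subsolution form \eqref{weakformPME} is then obtained by testing the pointwise inequality with a nonnegative $\psi$, integrating by parts (the boundary term vanishing by the Neumann condition), and convolving with $h_n$ exactly as in the passage from \eqref{weakform} to \eqref{weakform2}.

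With these checks in place, Theorem \ref{PMEthm} (with $m=1$, $f=0$) yields
\[
\int_0^T\int_\Omega z^2\,dx\,dt\le C(1)\Big(\frac{\max_i d_i}{\min_i d_i}\Big)^{2}T\,\norm{z^0}_{L_2(\Omega)}^2,
\]
so that $\norm{z}_{L_2(\Omega_T)}\le C\,\norm{z^0}_{L_2(\Omega)}$ with $C=C(m,T,d_i)$ (the $\norm{l}_{L_1((0,T))}$-term is absent here because $f=0$). Finally, since $0\le z_i\le z$ pointwise, $\norm{z_i}_{L_2(\Omega_T)}\le\norm{z}_{L_2(\Omega_T)}$ for each $i$, while $\norm{z^0}_{L_2(\Omega)}\le\sum_i\norm{z_i^0}_{L_2(\Omega)}$ by the triangle inequality; summing over $i=1,\dots,4$ and enlarging $C$ by the factor $4$ gives \eqref{entest}.

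The conceptual heart of the argument is not this last application but the reduction to the scalar inequality, which was carried out above and rests on two ingredients: the nonlocal chain-rule (entropy) inequality $\log c_i\,\partial_t(k\ast[c_i-c_i^0])\ge\partial_t(k\ast[\phi(c_i)-\phi(c_i^0)])$ from \cite[Corollary 6.1]{KSVZ}, valid by convexity of $\phi$, and the entropy-dissipation bound $\sum_i\mathrm{sig}(i)a_i\, r\,\log c_i\le 0$. Accordingly, the hard part in the proof proper is the rigorous passage from the pointwise differential inequality to the regularized weak formulation \eqref{weakformPME}: justifying the integration by parts together with the $h_n$-regularization for a solution that is sufficiently smooth but not a priori globally $C^\infty$, and confirming the zero initial trace $(k\ast z)|_{t=0}=0$ needed to suppress the boundary-in-time terms. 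Once that is secured, the rest is a direct invocation of Theorem \ref{PMEthm} and the elementary bound $z_i\le z$.
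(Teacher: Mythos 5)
Your proposal is correct and follows essentially the same route as the paper: the entropy-variable reduction to the scalar subsolution inequality $\partial_t(k\ast[z-z^0])-\Delta(dz)\le 0$ (which is precisely the paper's argument preceding the statement), followed by an application of Theorem \ref{PMEthm} with $m=1$, $a=d$, $f=0$, and the pointwise bound $0\le z_i\le z$. Your extra care in defining $d:=\min_i d_i$ on the set $\{z=0\}$ and in verifying the weak Neumann subsolution formulation only makes explicit details the paper leaves implicit.
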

\begin{remark}
    {\em The estimate \eqref{entest} is a subdiffusive analogue of a corresponding estimate from classical reaction-systems, see e.g.\ \cite[Section 2.3]{DeFe} and the references given therein. Note that \eqref{entest} improves the estimate
    from Theorem \ref{subdiffsystemthm} inasmuch as it provides a bound for the $c_i$ in $L_2(\log L)_2(\Omega_T)$.
    }
\end{remark}
\section{Abstract duality estimate and further applications}\label{absv}

In this section, we will present an abstract version of the duality inequalities (for solutions) from Section \ref{dualityestimates}. Instead of the Laplacian, we consider
an operator which is induced by a symmetric bilinear form on some Hilbert space. To illustrate the main result, we derive \textit{a priori} estimates for a fully nonlocal porous medium type equation. 

We start by recalling the setting used in \cite{ZWH}. Let $\mathcal{V}$ and $\mathcal{H}$ be real separable Hilbert spaces such that $\mathcal{V}$ is densely and continuously embedded into $\mathcal{H}$. Identifying $\mathcal{H}$ with its dual $\mathcal{H}^{\prime}$ we have $\mathcal{V} \hookrightarrow \mathcal{H} \hookrightarrow \mathcal{V}^{\prime}$, and
$$
(h, v)_{\mathcal{H}}=\langle h, v\rangle_{\mathcal{V}^{\prime} \times \mathcal{V}}, \quad h \in \mathcal{H}, v \in \mathcal{V},
$$
where $(\cdot, \cdot)_{\mathcal{H}}$ and $\langle\cdot, \cdot\rangle_{\mathcal{V}^{\prime} \times \mathcal{V}}$ denote the scalar product in $\mathcal{H}$ and the duality pairing between $\mathcal{V}^{\prime}$ and $\mathcal{V}$, respectively.

Notice that the arguments employed in the proof of \eqref{dualprop} can be easily adapted to see that
\begin{equation}\label{dualdual}
	\int_0^T\langle f(t) , (h\astT g)(t)\rangle_{\mathcal{V}^{\prime} \times \mathcal{V}} \,dt= \int_0^T\langle (h\ast f)(t) , g(t)\rangle_{\mathcal{V}^{\prime} \times \mathcal{V}} \,dt , 
\end{equation}
for all $f \in L_p\left([0, T] ; \mathcal{V}^{\prime}\right)$, $g \in L_{p'}\left([0, T] ; \mathcal{V}\right)$, $\frac{1}{p} + \frac{1}{p'} = 1, 1\leq p\leq \infty$, and $h \in L_1\left((0, T)\right)$.

Now, consider the following problem for the pair of functions $(u,w)$
\begin{equation}\label{abs}
	\frac{d}{d t}([k *(u-u_0)](t), v)_{\mathcal{H}} + \mathfrak{a}(w(t), v) = \langle f(t), v\rangle_{\mathcal{V}^{\prime} \times \mathcal{V}}, \quad v \in \mathcal{V}, \text { a.a. } t \in(0, T)	 ,
\end{equation}
where $d/dt$ means the generalized derivative of real functions on $(0,T)$, $\mathfrak{a}:\mathcal{V} \times \mathcal{V} \rightarrow \R$ is a bilinear symmetric form that satisfies
\begin{align}
	|\mathfrak{a}(\tilde{v},v) | \leq M\|\tilde{v}\|_\mathcal{V}  \|v\|_\mathcal{V} ,\nonumber\\
    \mathfrak{a}(v,v) \geq \kappa\|v\|_\mathcal{V}^2 -d\|v\|_\mathcal{H}^2 , \label{Ha}
\end{align}
for all $\tilde{v},v\in \mathcal{V}$, with constants $M,\kappa>0, d\geq0$. In addition, $u_0 \in \mathcal{H}$ and $f \in L_2\left((0, T) ; \mathcal{V}^{\prime}\right)$ are given data. Here, $ u\in L_2((0,T);\mathcal{H})$, $k\ast u\in C([0,T];\mathcal{H}) \cap H^{1}_2((0,T);\mathcal{V}')$, and $(k\ast u)|_{t=0}=0$, and $w\in L_2((0,T);\mathcal{V})$. The kernel $k$ is again of $\mathcal{PC}$-type.

 By following the argument from Section \ref{dualityestimates}, we obtain the subsequent equivalent weak formulation of \eqref{abs} involving the more regular kernel $k_n$. For any $\psi \in \mathcal{V}$ and a.a.\ $t\in(0,T)$, we have
	\begin{equation}\label{weakform3}
		\left(\psi , \frac{d}{d t}\big(k_n *\left[u-u_0\right]\big)\right)_\mathcal{H}(t)+\mathfrak{a}\big(\left(h_n * w\right)(t) , \psi\big) = \big\langle \left(h_n * f\right)(t) , \psi \big\rangle_{\mathcal{V}^{\prime} \times \mathcal{V}},
	\end{equation}
	for all $ n \in \mathbb{N}$.

In view of \eqref{Ha}, the shifted form $\tilde{\mathfrak{a}}(w, v): = \mathfrak{a}(w, v) + d(w, v)_{\cH}$ is nonnegative. Let $A:\,D(A)\to \cH$ be the operator on $\cH$ (with maximal domain) associated with $\tilde{\mathfrak{a}}$, that is, $\tilde{\mathfrak{a}}(w, v) = (Aw,v)_{\cH}$ for all $w\in D(A)$ and $v\in \cV$. It is well known that
$A$ is a nonnegative self-adjoint operator, which admits a (self-adjoint) square root $A^{\frac{1}{2}}$, with domain $D(A^{\frac{1}{2}}) := \mathcal{V}$ and
\begin{equation}\label{square}
	\tilde{\mathfrak{a}}(w,v) =  \left(A^{\frac{1}{2}}w , A^{\frac{1}{2}}v\right)_\mathcal{H},\quad w,v\in \cV,
\end{equation}
see, e.g., \cite{Kato,McIntosh}.
\begin{lemma} \label{BasicEstimateAb}
	Let $u_0\in \mathcal{H}$, $f\in L_2((0,T),\mathcal{V}')$, $w\in L_2((0,T),\mathcal{V})$, $u\in L_2((0,T);\cH)$, and $k\ast u\in C([0,T];\mathcal{H})$ such that $(k\ast u)|_{t=0}=0$. Assume $k$ is completely monotone. If the pair $(u,w)$ satisfies \eqref{abs}, then
	\begin{align}
		& \int_0^T \left( w , u \right)_\mathcal{H} dt + 
		\frac{1}{2}\,\int_0^T \big[k(T-t)+ k(t)\big]\norms{l \astT A^{\frac{1}{2}} w}^2_{\mathcal{H}}(t)\,dt
		\nonumber\\
		+ &\frac{1}{2}\int_0^T \int_0^t \frac{1}{s} [k(s) - k(2s)]
		\norm{(l \astT A^{\frac{1}{2}} w)(t,\cdot)-(l \astT A^{\frac{1}{2}} w)(t-s,\cdot)}^2_{\mathcal{H}}ds\,dt \nonumber\\
		& \leq \int_0^T \left( u_0  ,w\right)_{\mathcal{H}} + \int_0^T \langle l\ast f , w\rangle_{\cV'\times \cV} \, dt + d\int_0^T \left( w  , l\ast w\right)_{\mathcal{H}} \, dt . \label{BasEst3ab}
	\end{align}
\end{lemma}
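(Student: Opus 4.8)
The plan is to mimic the proofs of Lemma \ref{BasicEstimate1} and Corollary \ref{BasicEstimate3}, using the square root $A^{\frac12}$ to represent the shifted form and the Gelfand-triple duality \eqref{dualdual}. Throughout I abbreviate $W:=A^{\frac12}w\in L_2((0,T);\cH)$, which is well defined since $w\in L_2((0,T);\cV)=L_2((0,T);D(A^{\frac12}))$ and $A^{\frac12}:\cV\to\cH$ is bounded. Starting from the regularized weak formulation \eqref{weakform3}, I would choose $\psi=(l\astT w)(t,\cdot)$ and integrate over $(0,T)$, splitting the form by means of $\mathfrak a(\cdot,\cdot)=\tilde{\mathfrak a}(\cdot,\cdot)-d(\cdot,\cdot)_\cH$ and representing $\tilde{\mathfrak a}$ through \eqref{square}.

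For the memory term, the duality property \eqref{dualprop}/\eqref{dualdual} together with $l\ast k=1$ and $\partial_t(1\ast g)=g$ gives $l\ast\partial_t(k_n\ast[u-u_0])=h_n\ast[u-u_0]$, hence
\[
\int_0^T\big((l\astT w),\partial_t(k_n\ast[u-u_0])\big)_\cH\,dt=\int_0^T\big(w,h_n\ast u\big)_\cH\,dt-\int_0^T\big(w,(1\ast h_n)u_0\big)_\cH\,dt.
\]
For the principal part, since $A^{\frac12}$ is bounded on $\cV$ it commutes with the (Bochner) time convolutions, so $\tilde{\mathfrak a}(h_n\ast w,l\astT w)=(h_n\ast W,l\astT W)_\cH$, and I would reproduce the chain from \eqref{term2a} verbatim: writing $h_n\ast W=h_n\astT W+\big[(h_n\ast W)-(h_n\astT W)\big]$, using $g=-\partial_t((k\ast l)\astT g)$, the successive dual convolution \eqref{succprop} in the form $(k\ast l)\astT(h_n\astT W)=k_n\astT(l\astT W)$, and the duality relation \eqref{dualpropp}, to arrive at
\[
\int_0^T(h_n\astT W,l\astT W)_\cH\,dt=\int_0^T\big(l\astT W,\partial_t[k_n\ast(l\astT W)]\big)_\cH\,dt.
\]
Applying Lemma \ref{fundlemma1} with $v=l\astT W$ and the kernel $k_n$ (which lies in $H^1_{1,loc}$) and integrating in $t$ then produces exactly the two structured nonnegative terms on the left of \eqref{BasEst3ab}, but with $k_n$ and $-\dot k_n$ in place of $k$ and $\tfrac1s(k(s)-k(2s))$, together with a remainder $\cR_n:=\int_0^T\big((h_n\ast W)-(h_n\astT W),l\astT W\big)_\cH\,dt$ and the shift contribution $-d\int_0^T(h_n\ast w,l\astT w)_\cH\,dt$.

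Finally I would let $n\to\infty$. On the right-hand side, $h_n\ast u\to u$, $1\ast h_n\to1$, $h_n\ast w\to w$ and $h_n\ast f\to f$ in the respective $L_2$-spaces, so $\int_0^T\langle h_n\ast f,l\astT w\rangle\to\int_0^T\langle f,l\astT w\rangle=\int_0^T\langle l\ast f,w\rangle$ by \eqref{dualdual}, while $d\int_0^T(h_n\ast w,l\astT w)_\cH\to d\int_0^T(w,l\ast w)_\cH$ by \eqref{dualprop}. The remainder vanishes because $h_n\astT g\to g$ in $L_2((0,T);\cH)$ as well: the reflection $t\mapsto T-t$ turns $\astT$ into $\ast$, so this is the dual form of the cited convergence $h_n\ast g\to g$, and then $\cR_n\to0$ by Cauchy--Schwarz. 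For the two positive quadratic terms I would invoke convexity of $k_n$ via the lower bound \eqref{knbound}, pass to a subsequence with $k_{n_j}\to k$ a.e.\ on $(0,T)$ (possible since $k_n\to k$ in $L_1$), and apply Fatou's lemma to bound the $\liminf$ from below; rewriting the level-$n$ identity as $B_n+C_n=D_n-A_n$ and combining then yields \eqref{BasEst3ab}.

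The step I expect to demand the most care is the rigorous treatment of the principal part: justifying that $A^{\frac12}$ commutes with both convolutions $\ast$ and $\astT$ (so that $\tilde{\mathfrak a}$ can be pulled through as $\cH$-pairings of $W$), and establishing $h_n\astT g\to g$ in $L_2((0,T);\cH)$, which is what makes $\cR_n$ negligible in the limit. Keeping track of the extra shift term $d(\cdot,\cdot)_\cH$ throughout, and of the vanishing boundary contribution at $t=0$ (using $(k\ast u)|_{t=0}=0$), is otherwise routine bookkeeping parallel to the Laplacian case.
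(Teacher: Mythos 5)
Your proposal is correct and follows essentially the same route as the paper's proof: the test function $\psi=(l\astT w)(t,\cdot)$ in \eqref{weakform3}, the shift by $d(\cdot,\cdot)_\cH$, the chain $f=-\partial_t(1\astT f)$ / \eqref{succprop} / \eqref{dualpropp} leading to Lemma \ref{fundlemma1}, the convexity bound \eqref{knbound}, and Fatou in the limit $n\to\infty$. The only (cosmetic) difference is that you commute $A^{\frac12}$ through the convolutions at the outset and work with $\cH$-inner products of $W=A^{\frac12}w$, whereas the paper applies \eqref{dualpropp} with the coercive form $\tilde{\mathfrak{a}}$ regarded as an equivalent inner product on $\cV$ and only invokes \eqref{square} afterwards; your more explicit bookkeeping of the limit passages (in particular $h_n\astT g\to g$ via reflection, so that $\cR_n\to 0$) is a point the paper leaves terse.
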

\begin{proof}
	For $t\in (0,T)$ we choose in \eqref{weakform3} $\psi=(l \astT w)(t,\cdot)$ and integrate in time over $(0,T)$. From the duality property \eqref{dualprop} and the definition of $h_n$ we infer that
	\begin{align*}
		\int_0^T \left((l \astT w) , \frac{d}{dt}\big(k_n\ast [u-u_0]\big) \right)_\mathcal{H}  dt =
		\int_0^T  \left( w , \big(h_n\ast [u-u_0]) \right)_{\mathcal{H}} dt.
	\end{align*}
	We write
	$$\mathfrak{a}(h_n\ast w, l\astT w) = \mathfrak{a}(h_n\astT w, l\astT w) + \mathfrak{a}(h_n\ast w - h_n\astT w, l\astT w) .$$
	As in the scalar case, we can use the identity $f=-\frac{d}{dt} (1\astT f)$, together with \eqref{succprop}, to see that
	\begin{align*}
h_n\astT w=-\frac{d}{dt} \big[(k\ast h_n)\astT (l \astT\nabla w)\big]\quad\; \mbox{in}\;L_2((0,T);\cV).
\end{align*}
We shift the bilinear form $\mathfrak{a}$ as
\begin{align*}
    \mathfrak{a}\left(h_n\astT w, l\astT w\right) 
    & = \tilde{\mathfrak{a}}\left(h_n\astT w, l\astT w\right)
    -d \left(h_n\astT w, l\astT w\right)_{\cH}
    \end{align*}
Using \eqref{dualpropp} (where $\tilde{\mathfrak{a}}$ is regarded as an inner product for $\cV$), \eqref{square}, and Lemma \ref{fundlemma1} we   have
    \begin{align*}
    \int_0^T \tilde{\mathfrak{a}}& \left(h_n\astT w, l\astT w\right)\,dt 
		= - \int_0^T \tilde{\mathfrak{a}}\left( \frac{d}{dt} \big[k_n\astT  (l \astT  w)\big], (l \astT w)\right)\,dt\\
        &= \int_0^T  \tilde{\mathfrak{a}}\left(  (l \astT w),\frac{d}{dt} \big[k_n\ast  (l \astT  w)\big]\right)\,dt\\
		& =  \int_0^T \left( (l \astT A^{\frac{1}{2}}w),\frac{d}{dt} \big[k_n\ast (l \astT  A^{\frac{1}{2}}w)\big] \right)_\mathcal{H}\,dt\\
		& =  \int_0^T \Big(\frac{1}{2}\,\frac{d}{dt}\Big(k_n\ast \norms{l \astT A^{\frac{1}{2}} w}^2_{\mathcal{H}}\Big)(t)+
		\frac{1}{2}\,k_n(t)\norms{(l \astT A^{\frac{1}{2}} w)}^2_{\mathcal{H}}(t)\Big)\,dt\nonumber\\
		& \quad+\int_0^T \frac{1}{2}\int_0^t [-\dot{k}_n(s)]
		\norm{(l \astT A^{\frac{1}{2}} w)(t,\cdot)-(l \astT A^{\frac{1}{2}} w)(t-s,\cdot)}^2_{\mathcal{H}}\,ds\,dt\\
        & \ge \frac{1}{2}\,\int_0^T \big[k_n(T-t)+ k_n(t)\big]\norms{l \astT A^{\frac{1}{2}} w}^2_{\mathcal{H}}(t)\,dt\\
& \quad +\frac{1}{2}\int_0^T \int_0^t\frac{1}{s} [k_n(s) - k_n(2s)]\norm{(l \astT A^{\frac{1}{2}} w)(t,\cdot)-(l \astT A^{\frac{1}{2}} w)(t-s,\cdot)}^2_{\mathcal{H}}\,ds\,dt.
	\end{align*}
Sending now $n\to \infty$ leads to the desired inequality.
\end{proof}
\begin{remark}
    {\em (i) If we replace, in Lemma \ref{BasicEstimateAb}, the assumption \eqref{Ha} with the nonnegativity condition $\mathfrak{a}(v,v)\ge 0$ for all $v\in \cV$, then arguing as before but without shifting the bilinear form we can show that
   \begin{align}
		\int_0^T \left( w , u \right)_\mathcal{H} dt
		& \leq \int_0^T \left( u_0  ,w\right)_{\mathcal{H}} \, dt + \int_0^T \langle l\ast f , w\rangle_{\cV'\times \cV} \, dt. \label{BasEst3abZero}
	\end{align}  
Note that this estimate holds for every $\mathcal{PC}$-kernel $k$.

    (ii) If $f$ belongs to $L_2((0,T);\cH)$, the duality brackets can be replaced with the inner product in $\cH$.
    }
\end{remark}

\begin{example}
{\em     
We will illustrate through a fully nonlocal porous medium equation that Lemma \ref{BasicEstimateAb} can be applied to more general problems. More precisely, we consider the following extension of problem \eqref{PMED} (now on the full space):
\begin{equation} \label{2NPMED}
	\partial_t \big(k\ast [u-u_0]\big) + (-\Delta)^{\beta/2} \big(a u^m\big)= f,\quad (t,x)\in (0,T)\times\R^N,
\end{equation}
where $m\ge 1$, the function $a$ satisfies \eqref{acond} with $\Omega=\iR^N$, and $\beta\in (0,2]$. The kernel $k$ is again assumed to be a $\mathcal{PC}$-kernel, as for all PDE problems in the present paper. The fractional Laplacian can be defined via Fourier transform:
$$ \widehat{(-\Delta)^{\beta/2}\varphi}(\xi) = |\xi|^\beta\hat{\varphi}(\xi),$$
e.g.\ for $\varphi \in \mathcal{S}$, where $\mathcal{S}$ stands for the Schwartz class. This definition can be extended to suitable larger function spaces such as Bessel potential spaces of order $\beta$. We refer to \cite{DPV,Kwa} for a good account on the fractional Laplacian and its mapping properties.  

Existence, uniqueness, and regularity results for \eqref{2NPMED} were obtained in \cite{deP-Qui-Rod-Vaz-11, deP-Qui-Rod-Vaz-12} for the case with $a=1$,
$f=0$, $m>m_*:=(N-\beta)_+/N$, and the usual time derivative.

Equation \eqref{2NPMED} can be rewritten in the form of \eqref{abs}
with $w=au^m$,  $\cH = L_2(\R^N)$, $\cV = H^{\beta / 2} _2\left(\mathbb{R}^N\right)$ and the bilinear form
\begin{equation*}
	\mathfrak{a}(w,v) : = \int_{\mathbb{R}^N} \big((-\Delta)^{\beta / 4} w\big) \big((-\Delta)^{\beta / 4} v\big) \, dx.
\end{equation*}

Let $u_0\in L_2(\R^N)$, $f \in L_2((0,T); L_2(\R^N))$ and $w\in L_2((0,T); H_2^{\beta/2}(\R^N))$, and $k\ast u\in C([0,T];L_2(\R^N)$ such that $(k\ast u)|_{t=0}=0$. For a solution $u$ of \eqref{2NPMED} in the form \eqref{weakform3}, we can apply \eqref{BasEst3abZero} to yield
\begin{equation}\label{estim_weak}
	\int_0^T \left( w , u \right)_{L_2} dt   \leq \int_0^T \left( w , u_0 + l\ast f \right)_{L_2} dt .
\end{equation}
Taking $w=au^m$ and applying H\"older's inequality then yields
\begin{equation*}
		a_1\int_0^T \|u(t)\|_{L_{m+1}(\iR^N)}^{m+1} dt   \leq 
        \norm{w}_{L_2((0,T)\times \iR^N)}\norm{u_0}_{L_2(\iR^N)}+
        \norm{l}_{L_1((0,T))}\norm{f}_{L_2((0,T)\times \iR^N)},
 \end{equation*}       
which shows $u\in L_{m+1}((0,T)\times \R^N)$. Suitable additional assumptions on the data $u_0$ and $f$ allow us to show analogous estimates as in \eqref{PME1b}. In fact, $\norm{u}_{ L_{m+1}((0,T)\times\R^N)}$ can be bounded from above purely by the data if 
$u_0\in L_{m+1}(\R^N)$ and  $f\in L_{m+1}((0,T)\times\R^N)$.
}
\end{example}

\end{document}